\begin{document}

\title{Convergence of one-level and multilevel unsymmetric collocation for second order elliptic boundary value problems}
\shorttitle{}

\author{%
{\sc
Zhiyong Liu\thanks{ Email: zhiyong@nxu.edu.cn},
and Qiuyan Xu\thanks{Corresponding author. Email: qiuyanxu@nxu.edu.cn}
} \\[2pt]
School of Mathematics  and statistics,\\\
Ningxia University,\ Yinchuan 750021, Ningxia,\ China}
\shortauthorlist{Z. Liu and Q. Xu}

\maketitle

\begin{abstract}
{The paper proves convergence of one-level and multilevel unsymmetric collocation for second order elliptic boundary value problems on
the bounded domains. By using Schaback's linear discretization theory,
$L^{2}$ errors are obtained based on the kernel-based trial spaces generated by the compactly
supported radial basis functions. For the one-level unsymmetric collocation case,
we obtain convergence when the testing discretization is finer than the trial discretization.
The convergence rates depend on the regularity of the solution, the smoothness of the computing domain,
and the approximation of scaled kernel-based spaces.
The multilevel process is implemented by employing successive refinement scattered data sets and scaled compactly
supported radial basis functions with varying support radii. Convergence of multilevel
collocation is further proved based on the theoretical results of one-level unsymmetric collocation.
In addition to having the same dependencies as the one-level
collocation, the convergence rates of multilevel unsymmetric collocation especially depends on
the increasing rules of scattered data and the selection of scaling parameters.   }
{elliptic boundary value problems; radial basis functions; one-level collocation;
multilevel collocation; convergence rates.}
\end{abstract}

\section{Introduction}
\label{sec;introduction}
Since the first kernel-based collocation method was introduced by Kansa \citep[see][]{kansa86}, radial basis functions have been
successfully applied to solving various partial differential equations numerically.
But unfortunately, we are obtaining highly accurate
solutions from severely ill-conditioned linear systems and high computational cost
when using radial basis functions collocation methods with global supports.
To a certain extent, the utilization of compactly supported radial basis
functions (such as well-known Wendland's functions \citep[see][]{w95} and Wu's functions \citep[see][]{wu95})
improves the condition of collocation matrices. Then a multilevel interpolation algorithm
for the approximation of functions is developed, right after the popularity of
compactly supported radial basis functions \citep[see][]{floater}.
With a multilevel algorithm, the condition number of the discrete matrix can be relatively
small, and computation can be performed in $O(N)$ operations. In this algorithm,
the interpolation problem is solved
first on the coarsest level by one of the compactly supported radial basis functions
with a larger support (usually scaling the size of the support with the fill distance).
Then the residual can be formed, and be computed on the next finer level by
the same compactly supported radial basis function but with a smaller support.
This process can be repeated and stopped on the finest level. And the final
approximation is the sum of all of interpolants.
The sophisticated proofs of convergence of multilevel interpolation algorithm
were given for the first time on the sphere \citep[see][]{le10},
and on bounded domains \citep[see][]{w10}.
In fact, all one-level collocation methods can be used for constructing multilevel collocation algorithms,
such as multilevel symmetric collocation \citep[see][]{farell13,f99}, multilevel
Galerkin collocation \citep[see][]{ch14,w18},
and multilevel unsymmetric collocation (in this paper).
However, the situation becomes more complicated
when using multilevel algorithm to solve partial differential equations numerically.

One-level symmetric collocation method was developed by \citet{f97}, based on the generalized
Hermite interpolation method \citep[see][]{wu92}. The collocation matrices produced by
symmetric method are symmetric and positive definite when the trial data (centers)
and the testing data (collocation points) are the same.
Its convergence has been proved in \citet{franke} for generate
radial basis functions trial spaces and in \cite{farell13} for
some scaled kernel-based spaces. \citet{f99} suggests using multilevel symmetric collocation
to solve partial differential equations, and carries out a large number of
numerical observations. Convergence of the multilevel symmetric collocation for elliptic
boundary value problems was given in \citet{farell13}. Different from the multilevel interpolation algorithm,
the scaling parameter is no longer proportional to the fill distance in the multilevel symmetric collocation
method. This leads to a non-stationary multilevel approximation space.
In fact, the other two multilevel collocation methods also lead to the same phenomenon.
The multilevel symmetric collocation has been used for other scientific and engineering calculations
successfully \citep[see][]{chen,ch12,ch13,le12,farell17,townsend13,w17}.

One-level Galerkin collocation method was proposed by \citet{w98}. The method is constructed
by combining Ritz-Galerkin approximation with radial basis functions. \citet{w98} proves that
the Galerkin collocation method leads to the same error bounds as the classical finite element methods
under $W^{1,2}(\Omega)$ norm. By using the Aubin-Nische technology, \citet{w18} gives $L^{2}(\Omega)$ error
of one-level Galerkin collocation method. The idea of combing a multilevel interpolation
algorithm with the Galerkin approximation was suggested in \citet{w98} first. Then convergence
of multilevel Galerkin collocation
method was given in \citet{ch14} and further refined in \cite{w18}.

One-level unsymmetric collocation method was proposed by \citet{kansa86}, which
has a little earlier history than symmetric collocation and
Galerkin collocation methods. Compared with the other two collocation methods,
one-level unsymmetric collocation method is more flexible and concise. It
avoids constructing the trial space based on complicated differential operator
or implementing the numerical integration. However, \citet{hon} shows that
this method may be failure for some
scattered data sites with the special distribution. \cite{s07,s10}
prove that one-level unsymmetric collocation method is convergent once
the testing discretization has been selected to have more degrees of freedom than the trial
discretization. By using Schaback's linear discretization theory, we will
prove the $L^{2}$ error of one-level unsymmetric collocation method. Further, we
will construct multilevel unsymmetric collocation method and prove
its convergence in the paper.

The paper is organized as follows. Section 2 is devoted to some notation and
the second order elliptic partial differential equation model. Section 3 is
an introduction of kernel-based trial spaces generated by compactly supported
radial basis functions. A Bernstein inequality for functions from scaled kernel-based
trial spaces will be given in this section. Section 4 and Section 5 are convergence
of one-level unsymmetric collocation method and multilevel unsymmetric collocation method.
Section 6 closes the paper with a concise conclusion.

\section{ Preliminaries}
We assume $\Omega\subseteq\mathbb{R}^{d}$ is a bounded domain. We will provide the $L^{2}(\Omega)$ convergence analysis of one-level and multilevel unsymmetric collocation for elliptic boundary value problems
in the paper. So we start by introducing some notation about integer order and fractional order Sobolev spaces.
\subsection{Sobolev norms}
With a nonnegative integer $k$ and $1\leq p<\infty$, Sobolev space $W^{k,p}(\Omega)$ contains all functions which have weak differentiability order
$k$ and integrability power $p$. $W^{k,p}(\Omega)$ has been assembled with the (semi-) norms
$$|u|_{W^{k,p}(\Omega)}=(\sum_{|\alpha|=k}\|D^{\alpha}u\|_{L^{p}(\Omega)}^{p})^{1/p},\quad
\|u\|_{W^{k,p}(\Omega)}=(\sum_{|\alpha|\leq k}\|D^{\alpha}u\|_{L^{p}(\Omega)}^{p})^{1/p}.$$
When $p=\infty$, the (semi-) norms are defined as
$$|u|_{W^{k,\infty}(\Omega)}=\sup_{|\alpha|=k}\|D^{\alpha}u\|_{L^{\infty}(\Omega)},\quad
\|u\|_{W^{k,\infty}(\Omega)}=\sup_{|\alpha|\leq k}\|D^{\alpha}u\|_{L^{\infty}(\Omega)}.$$
When $p=2$ and noninteger values $0<\sigma<\infty$, the fractional order Sobolev space $W^{\sigma,2}(\mathbb{R}^{d})$ is characterized by Fourier transform
\begin{equation}\label{ft}
\widehat{u}(\boldsymbol{\omega})=(2\pi)^{-d/2}\int_{\mathbb{R}^{d}}u(\textbf{x})e^{-i\textbf{x}^{T}\boldsymbol{\omega}}d\textbf{x}.
\end{equation}
At this time the norm of $W^{\sigma,2}(\mathbb{R}^{d})$ is given by
\begin{equation}\label{snorm}
\|u\|_{W^{\sigma,2}(\mathbb{R}^{d})}^{2}=\int_{\mathbb{R}^{d}}
|\widehat{u}(\boldsymbol{\omega})|^{2}(1+\|\boldsymbol{\omega}\|_{2}^{2})^{-\sigma}d\boldsymbol{\omega}.
\end{equation}
$W^{\sigma,2}(\Omega)$ is a subspace of $W^{\sigma,2}(\mathbb{R}^{d})$, which can be simply defined by
$$W^{\sigma,2}(\Omega)=\{u|_{\Omega}: u\in W^{\sigma,2}(\mathbb{R}^{d})\}.$$
We always assume $\Omega$ has a $C^{k,s}-$boundary with $s\in[0,1)$, thus we can define the Sobolev norm on $\partial\Omega$ by
a number of $C^{k,s}-$diffeomorphisms
$$\psi_{j}:B\rightarrow V_{j}, \quad 1\leq j \leq K,$$
where $B=B(\textbf{0},1)$ is the unit ball in $\mathbb{R}^{d-1}$ and $V_{j}\subseteq \mathbb{R}^{d}$ are
open sets such that $\partial\Omega=\bigcup_{j=1}^{K}V_{j}$. Hence the Sobolev norm on the boundary can be
defined by
$$\|u\|_{W^{\sigma,p}(\partial\Omega)}^{2}=\sum_{j=1}^{K}\|(u\omega_{j})\circ \psi_{j}\|_{W^{\sigma,p}(B)}^{p},$$
where $\{\omega_{j}\}$ is a partition of unity with respect to $V_{j}$.

The connection between $W^{\sigma,2}(\Omega)$ and $W^{\sigma,2}(\mathbb{R}^{d})$ can be
characterized by Sobolev extension \citep[see][]{adams}.
\begin{lemma}\label{skuo}(\textbf{Sobolev extension})
Let $\Omega\subseteq \mathbb{R}^{d}$ be open and have a Lipschitz boundary. Let $\sigma\geq0$.
Then there exists a linear operator $E: W^{\sigma,2}(\Omega)\rightarrow W^{\sigma,2}(\mathbb{R}^{d})$
such that for all $f\in W^{\sigma,2}(\Omega)$ the following results hold:

(1) $Ef|_{\Omega}=f$,

(2)$\|Ef\|_{W^{\sigma,2}(\mathbb{R}^{d})}\leq C\|f\|_{W^{\sigma,2}(\Omega)}$.
\label{l21}
\end{lemma}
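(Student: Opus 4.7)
The strategy is the classical one of \emph{localize, flatten, extend, glue}, which I would carry out as follows. Since $\partial\Omega$ is Lipschitz, one can choose a finite open cover $\{U_{j}\}_{j=0}^{N}$ of $\overline{\Omega}$ with $\overline{U_{0}}\subset\Omega$, and for $j\geq 1$ a bi-Lipschitz chart $\Phi_{j}:U_{j}\to Q=\{y\in\mathbb{R}^{d}:\|y\|_{\infty}<1\}$ that straightens $\partial\Omega\cap U_{j}$ to $Q\cap\{y_{d}=0\}$ and sends $\Omega\cap U_{j}$ to $Q_{+}=Q\cap\{y_{d}>0\}$. Fix a smooth partition of unity $\{\eta_{j}\}$ subordinate to this cover. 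For each $j\geq 1$, write $f_{j}=(\eta_{j}f)\circ\Phi_{j}^{-1}$, which lives in $W^{\sigma,2}(Q_{+})$ with norm bounded by $C\|f\|_{W^{\sigma,2}(\Omega)}$; the piece $\eta_{0}f$ can be extended by zero since it is compactly supported in $\Omega$.

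For the local extension from $Q_{+}$ to all of $\mathbb{R}^{d}$ I would invoke Stein's universal extension operator, which has the form
\begin{equation*}
(\mathcal{E}g)(y',y_{d})=\int_{1}^{\infty}g(y',\lambda y_{d})\,\psi(\lambda)\,d\lambda\qquad(y_{d}<0),
\end{equation*}
where $\psi\in C^{\infty}([1,\infty))$ decays rapidly and satisfies the moment conditions $\int_{1}^{\infty}\lambda^{k}\psi(\lambda)\,d\lambda=(-1)^{k}$ for all $k\geq 0$. These moment conditions force the extension and all of its weak derivatives to match across $\{y_{d}=0\}$, giving boundedness $\mathcal{E}:W^{k,2}(Q_{+})\to W^{k,2}(\mathbb{R}^{d})$ for every integer $k\geq 0$ simultaneously.

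The desired operator is then $Ef=\eta_{0}f+\sum_{j\geq 1}\chi_{j}\cdot(\mathcal{E}f_{j})\circ\Phi_{j}$, where $\chi_{j}$ is a cutoff equal to one on $U_{j}$ and supported in a slightly larger neighborhood; property (1) is immediate from the partition of unity, and (2) follows from the local bounds together with the bi-Lipschitz invariance of Sobolev norms on the chart neighborhoods. For non-integer $\sigma$ I would recover the bound either by real interpolation between two neighboring integer exponents — using the couple $(W^{k,2},W^{k+1,2})$ with $\sigma=k+\theta$ and the fact that $\mathcal{E}$ is simultaneously bounded on both endpoints — or, equivalently, by checking that the Fourier characterization \eqref{snorm} is preserved under $\mathcal{E}$.

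The main technical obstacle is that only Lipschitz regularity of $\partial\Omega$ is assumed, so the charts $\Phi_{j}$ and their inverses are merely Lipschitz. Composition with a bi-Lipschitz map preserves $W^{\sigma,2}$ for $0\leq\sigma\leq 1$ trivially, but for larger $\sigma$ one cannot differentiate the chart repeatedly. This is exactly where Stein's construction is essential: the \emph{moment condition} on $\psi$ replaces the smoothness of the boundary, so that after Stein's extension the composition with $\Phi_{j}$ only needs to be controlled as a map on $W^{\sigma,2}(\mathbb{R}^{d})$, which holds for Lipschitz charts by Stein's theorem on Lipschitz graph domains. Everything else is bookkeeping.
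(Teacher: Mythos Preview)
The paper does not prove this lemma; it is simply quoted as a standard result from Adams \& Fournier (2003). So there is no ``paper's own proof'' to compare against --- you are supplying an argument where the authors supply only a citation.

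Your outline is the classical Stein strategy and is correct in spirit, but the flattening step, as you have written it, does not go through for $\sigma>1$. You form $f_{j}=(\eta_{j}f)\circ\Phi_{j}^{-1}$ and assert that it lies in $W^{\sigma,2}(Q_{+})$ with controlled norm; this requires composition with the bi-Lipschitz map $\Phi_{j}^{-1}$ to preserve $W^{\sigma,2}$, which is false in general once $\sigma>1$ (second and higher derivatives of $\Phi_{j}^{-1}$ need not exist). Your closing paragraph recognizes the difficulty, but the proposed resolution --- that the moment conditions on $\psi$ somehow repair the chart composition, or that ``Stein's theorem on Lipschitz graph domains'' makes composition with Lipschitz charts bounded on $W^{\sigma,2}(\mathbb{R}^{d})$ --- is not correct. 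The moment conditions control how the \emph{extension} matches derivatives across the interface; they say nothing about pullback by a non-smooth change of variables.

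The fix is to avoid flattening altogether. After a rigid rotation (which \emph{does} preserve every Sobolev space), $\Omega\cap U_{j}$ becomes a special Lipschitz domain $\{x_{d}>\gamma(x')\}$, and Stein's extension is defined directly on that domain via a regularized distance $\delta^{*}$ that is smooth off $\partial\Omega$ and satisfies $|D^{\alpha}\delta^{*}|\leq C_{\alpha}\,(\delta^{*})^{1-|\alpha|}$. The extension then reads, for $x$ outside, $(\mathcal{E}g)(x)=\int_{1}^{\infty}g\bigl(x+\lambda\,\delta^{*}(x)\,e_{d}\bigr)\,\psi(\lambda)\,d\lambda$. It is the smoothness of $\delta^{*}$ away from the boundary, combined with the moment conditions on $\psi$, that substitutes for boundary regularity --- not any composition property of Lipschitz charts. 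With this correction your localize--extend--glue scheme and the interpolation step for non-integer $\sigma$ go through exactly as you describe.
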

To establish a relation between Sobolev norm for the domain $\Omega$ and Sobolev norm for the boundary
$\partial\Omega$, we need to cite the trace theorem and inverse trace theorem.
\begin{lemma}\label{trace}(\textbf{trace theorem})Suppose $\Omega\subseteq\mathbb{R}^{d}$ is a bounded domain with
$C^{k,s}-$boundary. Let $1/2<\sigma<k+s$. Then, there exists a continuous linear operator
$$T:W^{\sigma,2}(\Omega)\rightarrow W^{\sigma-1/2,2}(\partial\Omega)$$
such that $Tu=u|_{\partial\Omega}$ for all $u\in W^{\sigma,2}(\Omega)$ and
$$\|u\|_{W^{\sigma-1/2,2}(\partial\Omega)}\leq C\|u\|_{W^{\sigma,2}(\partial\Omega)}$$
for some positive constant $C$ which is independent of $u$.
\end{lemma}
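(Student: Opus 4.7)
The plan is to prove the trace theorem by the classical localization-and-flattening procedure combined with a Fourier-analytic computation on a half-space. First, I would reduce the statement to a model problem on $\mathbb{R}^{d}_{+}=\{x\in\mathbb{R}^{d}:x_{d}>0\}$. Using the $C^{k,s}$-diffeomorphisms $\psi_{j}:B\to V_{j}$ (arranged so that each $\psi_{j}$ sends $B\cap\{x_{d}=0\}$ into $\partial\Omega$) together with the partition of unity $\{\omega_{j}\}$ subordinate to $\{V_{j}\}$, I decompose $u=\sum_{j}u\omega_{j}$. Each piece is compactly supported in $V_{j}$, so $(u\omega_{j})\circ\psi_{j}$ is supported in $B$, and its trace on $B\cap\{x_{d}=0\}$ corresponds (via the coordinate change) to the trace of $u\omega_{j}$ on $\partial\Omega\cap V_{j}$.

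Second, for the model case on the half-space I would establish boundedness of the trace from $W^{\sigma,2}(\mathbb{R}^{d}_{+})$ into $W^{\sigma-1/2,2}(\mathbb{R}^{d-1})$. By Lemma \ref{skuo} it suffices to treat $v\in W^{\sigma,2}(\mathbb{R}^{d})$ and estimate $v_{0}(x'):=v(x',0)$. The Fourier inversion formula in the last variable gives
\begin{equation*}
\widehat{v_{0}}(\boldsymbol{\omega}')=(2\pi)^{-1/2}\int_{\mathbb{R}}\widehat{v}(\boldsymbol{\omega}',\omega_{d})\,d\omega_{d},
\end{equation*}
and the Cauchy--Schwarz inequality yields
\begin{equation*}
|\widehat{v_{0}}(\boldsymbol{\omega}')|^{2}\leq\left(\int_{\mathbb{R}}(1+\|\boldsymbol{\omega}\|_{2}^{2})^{-\sigma}d\omega_{d}\right)\left(\int_{\mathbb{R}}(1+\|\boldsymbol{\omega}\|_{2}^{2})^{\sigma}|\widehat{v}(\boldsymbol{\omega})|^{2}d\omega_{d}\right).
\end{equation*}
The change of variable $\omega_{d}=(1+\|\boldsymbol{\omega}'\|_{2}^{2})^{1/2}t$ reduces the first factor to $C_{\sigma}(1+\|\boldsymbol{\omega}'\|_{2}^{2})^{1/2-\sigma}$, where the $t$-integral converges precisely because $\sigma>1/2$. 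Multiplying by $(1+\|\boldsymbol{\omega}'\|_{2}^{2})^{\sigma-1/2}$, integrating in $\boldsymbol{\omega}'$, and invoking the Fourier characterization \eqref{snorm} delivers the bound $\|v_{0}\|_{W^{\sigma-1/2,2}(\mathbb{R}^{d-1})}\leq C\|v\|_{W^{\sigma,2}(\mathbb{R}^{d})}$.

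Third, I would transfer the model estimate back to $\partial\Omega$. Applying the half-space bound to each $(u\omega_{j})\circ\psi_{j}$, summing in $j$, and using the definition
\begin{equation*}
\|u\|_{W^{\sigma-1/2,2}(\partial\Omega)}^{2}=\sum_{j=1}^{K}\|(u\omega_{j})\circ\psi_{j}\|_{W^{\sigma-1/2,2}(B)}^{2}
\end{equation*}
gives the claimed inequality with a constant depending on the atlas, the partition of unity, and $\sigma$, but independent of $u$. Linearity and well-definedness of $T$ (independence of the chart) follow from the density of $C^{\infty}(\overline{\Omega})$ in $W^{\sigma,2}(\Omega)$, on which $T$ coincides with the classical restriction to $\partial\Omega$.

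The main obstacle I expect is not the Fourier computation but the mapping properties needed to pass between the domain and the local charts. Specifically, one must verify that composition with a $C^{k,s}$-diffeomorphism $\psi_{j}$ acts boundedly on $W^{\sigma,2}$, which holds exactly in the regime $\sigma<k+s$ (this is where the upper bound in the hypothesis enters), and that multiplication by the smooth cutoff $\omega_{j}$ preserves $W^{\sigma,2}$. Granting these standard fractional-Sobolev mapping facts, the rest of the argument assembles routinely.
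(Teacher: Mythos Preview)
The paper does not prove this lemma at all; immediately after the statement it simply writes ``The proof of trace theorem can be found in \citet{wl87}.'' Your proposal is therefore not competing with an argument in the paper but supplying one where the paper defers to a reference. The route you outline---localize by the partition of unity, flatten via the boundary charts, prove the model half-space estimate by the Cauchy--Schwarz/Fourier computation, and then reassemble---is exactly the classical proof one finds in standard references such as Wloka's book, so in that sense you are reproducing what the cited source would contain.

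One small mismatch worth noting: in the paper's conventions the charts $\psi_{j}$ map the $(d{-}1)$-dimensional ball $B\subset\mathbb{R}^{d-1}$ onto patches $V_{j}$ of $\partial\Omega$, whereas your argument tacitly uses charts that straighten a full neighbourhood of the boundary to a half-space and then restrict to $\{x_{d}=0\}$. These two pictures are equivalent after extending each $\psi_{j}$ to a $C^{k,s}$-diffeomorphism of a half-ball in $\mathbb{R}^{d}$, but you should make that extension explicit so that the pullback of $u\omega_{j}$ really lives in a half-space Sobolev space before you invoke the model estimate. Apart from this bookkeeping, your identification of where the hypotheses enter (the lower bound $\sigma>1/2$ for convergence of the $\omega_{d}$-integral, the upper bound $\sigma<k+s$ for boundedness of composition with a $C^{k,s}$ map) is correct, and the argument goes through.
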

The proof of trace theorem can be found in \citet{wl87}. The following inverse trace theorem
depicts the opposite situation, namely, the linear mapping from
$W^{\sigma-1/2,2}(\partial\Omega)$ to $W^{\sigma,2}(\Omega)$.
\begin{lemma}
\label{l23}
(\textbf{inverse trace theorem})Suppose $\Omega\subseteq\mathbb{R}^{d}$ is a bounded region with
$C^{k,s}-$boundary. Let $1/2<\sigma<k+s$. Then, there exists a continuous linear operator
$$Z:W^{\sigma-1/2,2}(\partial\Omega)\rightarrow W^{\sigma,2}(\Omega)$$
such that $T\circ Zu=u$ for all $u\in W^{\sigma-1/2,2}(\partial\Omega)$.
\end{lemma}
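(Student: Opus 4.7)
The plan is to reduce the statement to the half-space model via the boundary charts $\psi_j:B\to V_j$ already introduced above and then construct the lifting $Z$ explicitly on the half-space by a Fourier-multiplier formula.

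First I would fix a $C^{k,s}$-partition of unity $\{\omega_j\}_{j=1}^K$ subordinate to the cover $\{V_j\}$ of $\partial\Omega$, and given $u\in W^{\sigma-1/2,2}(\partial\Omega)$ form the pieces $u_j=(u\,\omega_j)\circ\psi_j$, which by definition of the boundary norm lie in $W^{\sigma-1/2,2}(B)\subseteq W^{\sigma-1/2,2}(\mathbb{R}^{d-1})$ after extending by zero (note $u_j$ is supported in a compact subset of $B$, so the zero extension does not lose regularity thanks to $\sigma-1/2<k+s-1/2$). I would then apply a standard Sobolev extension to $\mathbb{R}^{d-1}$ as in Lemma \ref{l21} if needed, so that each $u_j$ belongs to $W^{\sigma-1/2,2}(\mathbb{R}^{d-1})$.

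Next I would define a half-space lift $E_+:W^{\sigma-1/2,2}(\mathbb{R}^{d-1})\to W^{\sigma,2}(\mathbb{R}^{d-1}\times[0,\infty))$ by the Fourier multiplier
\begin{equation*}
(E_+ g)(x',x_d)=(2\pi)^{-(d-1)/2}\int_{\mathbb{R}^{d-1}}\widehat{g}(\boldsymbol{\omega}')\,\phi\!\left(x_d\,(1+\|\boldsymbol{\omega}'\|_2^2)^{1/2}\right)e^{i x'\cdot\boldsymbol{\omega}'}\,d\boldsymbol{\omega}',
\end{equation*}
where $\phi\in C_c^\infty(\mathbb{R})$ is a fixed cut-off with $\phi(0)=1$. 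Two things need to be checked: (i) $E_+g|_{x_d=0}=g$, which is immediate from $\phi(0)=1$ and Fourier inversion; and (ii) the bound $\|E_+ g\|_{W^{\sigma,2}(\mathbb{R}^d_+)}\le C\|g\|_{W^{\sigma-1/2,2}(\mathbb{R}^{d-1})}$, which follows by writing the Sobolev norm through the Fourier characterization \eqref{snorm}, performing the substitution $t=x_d(1+\|\boldsymbol{\omega}'\|_2^2)^{1/2}$ in $x_d$, and using that $\phi$ and its derivatives are Schwartz.

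Finally I would transport each half-space lift back to $\Omega$ by composing with $\psi_j^{-1}$ extended into a tubular neighbourhood (using the $C^{k,s}$ regularity of the boundary chart to preserve Sobolev regularity up to order $\sigma<k+s$), multiply by another cut-off supported in a collar of $V_j$, and sum:
\begin{equation*}
Zu=\sum_{j=1}^{K}\chi_j\cdot\bigl((E_+ u_j)\circ\psi_j^{-1}\bigr),
\end{equation*}
with $\chi_j$ equal to $1$ on the support of $\omega_j\circ\psi_j^{-1}$. Continuity of $Z$ then follows by combining the half-space estimate, the invariance of Sobolev norms of order $<k+s$ under $C^{k,s}$-diffeomorphisms, and the triangle inequality over $j$; the identity $T\circ Z=\mathrm{id}$ follows because on $\partial\Omega$ each summand restricts to $u\,\omega_j$ and $\sum_j\omega_j=1$. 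The main obstacle I expect is the careful bookkeeping of regularity under the $C^{k,s}$-diffeomorphisms $\psi_j$: one must ensure that the bound $\sigma<k+s$ is exactly what is needed so that composition with $\psi_j^{\pm1}$ is bounded on $W^{\sigma,2}$, and that the anisotropic trace/lift estimate in the half-space really gives the gain of one-half derivative, with no hidden loss in the fractional case.
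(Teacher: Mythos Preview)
The paper does not actually supply a proof of Lemma~\ref{l23}: it is stated as a standard result, in parallel with Lemma~\ref{trace}, whose proof the authors attribute to \cite{wl87}. So there is no ``paper's own proof'' to compare against.

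Your sketch is the standard textbook construction (essentially the one in Lions--Magenes or Wloka): localize with a partition of unity, flatten the boundary via the charts $\psi_j$, lift on the half-space with a Fourier multiplier of the form $\widehat g(\boldsymbol{\omega}')\,\phi(x_d\langle\boldsymbol{\omega}'\rangle)$, then push back and sum. The half-space computation you outline is correct; after a full Fourier transform one gets $\widehat{E_+g}(\boldsymbol{\omega}',\omega_d)=\widehat g(\boldsymbol{\omega}')\langle\boldsymbol{\omega}'\rangle^{-1}\widehat\phi(\omega_d/\langle\boldsymbol{\omega}'\rangle)$, and the substitution $t=\omega_d/\langle\boldsymbol{\omega}'\rangle$ produces exactly the factor $\langle\boldsymbol{\omega}'\rangle^{2\sigma-1}$, which is the $W^{\sigma-1/2,2}$ weight. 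Two small points to tighten: (i) to use the Fourier characterization \eqref{snorm} for the $W^{\sigma,2}$ norm you should define $E_+g$ on all of $\mathbb{R}^d$ (your $\phi\in C_c^\infty(\mathbb{R})$ already allows this) rather than on the half-space, and then restrict; (ii) the zero-extension of $u_j$ from $B$ to $\mathbb{R}^{d-1}$ is unproblematic only because the partition of unity can be chosen with $\operatorname{supp}\omega_j\Subset V_j$, so that $u_j$ has compact support strictly inside $B$ --- make that choice explicit. The restriction $\sigma<k+s$ is indeed exactly what is needed for boundedness of composition with the $C^{k,s}$ charts, as you anticipated.
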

\subsection{Elliptic boundary value problems and regularity}
\label{sec;Sobolev}
We consider a second order elliptic boundary value problem with the form
\begin{align}
Lu & = f
\quad \hbox{in}\ \Omega,
\label{31} \\[4pt]
u & = g
\quad \hbox{on}\ \partial\Omega.
\label{32}
\end{align}
where $L$ is strictly elliptic on $\Omega$ with the form
\begin{equation}
Lu(\textbf{x})=\sum_{i,j=1}^{d}a_{ij}(\textbf{x})\frac{\partial^{2}}{\partial x_{i}\partial x_{j}}u(\textbf{x})+
\sum_{i=1}^{d}b_{i}(\textbf{x})\frac{\partial}{\partial x_{i}}u(\textbf{x})+c(\textbf{x})u(\textbf{x}).
\label{33}
\end{equation}

We assume that the solution of (\ref{31})-(\ref{32}) has the regularity $u\in W^{\sigma,2}(\Omega)$
with $\sigma> d/2+2$. When making appropriate assumptions about the coefficients of (\ref{33}),
we can obtain Lemma \ref{llianxu} which describes the continuity of linear operator $L$. The proof can be
found in \cite{giesl}.
\begin{lemma}
\label{llianxu}
(\textbf{continuity of $L$})Let $\sigma\in\mathbb{R}$, and $k=\lfloor\sigma\rfloor>d/2+2$.
Let the coefficients of (\ref{33}) satisfy $a_{ij},b_{i},c\in W^{k-1,\infty}(\Omega)$. Then $L$ is a bounded
operator from $W^{\sigma,2}(\Omega)$ to $W^{\sigma-2,2}(\Omega)$ which satisfies
$$\|Lu\|_{W^{\sigma-2,2}(\Omega)}\leq C\|u\|_{W^{\sigma,2}(\Omega)},\quad  u\in W^{\sigma,2}(\Omega).$$
\end{lemma}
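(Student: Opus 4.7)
The plan is to write $Lu = \sum_{i,j} a_{ij}\,\partial_i\partial_j u + \sum_i b_i\,\partial_i u + c\,u$ and apply the triangle inequality in $W^{\sigma-2,2}(\Omega)$, so that the claim reduces to a finite collection of estimates of the form $\|\phi\, D^\alpha u\|_{W^{\sigma-2,2}(\Omega)} \leq C \|\phi\|_{W^{k-1,\infty}(\Omega)} \|u\|_{W^{\sigma,2}(\Omega)}$ with $|\alpha| \leq 2$ and $\phi$ one of the coefficients $a_{ij}$, $b_i$, $c$. Each of these factors cleanly into two ingredients that I treat in turn: the continuity of differentiation, and a pointwise multiplier estimate on $W^{\sigma-2,2}(\Omega)$.

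For the differentiation step, I need $\|D^\alpha u\|_{W^{\sigma-|\alpha|,2}(\Omega)} \leq \|u\|_{W^{\sigma,2}(\Omega)}$. When $\sigma$ is an integer this is immediate from the seminorm definition in Section 2.1. For fractional $\sigma$ I would extend $u$ to $\mathbb{R}^d$ via Lemma \ref{l21} and use the Fourier characterization (\ref{snorm}): each $\partial_j$ corresponds to multiplication of $\widehat{u}$ by $i\omega_j$, which reduces the effective weight by one power of $\|\omega\|_2$, and the $W^{\sigma-|\alpha|,2}(\Omega)$ norm is defined by infimum over extensions so nothing is lost on restriction.

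For the multiplication step I need $\|\phi v\|_{W^{s,2}(\Omega)} \leq C \|\phi\|_{W^{k-1,\infty}(\Omega)} \|v\|_{W^{s,2}(\Omega)}$ with $s = \sigma - 2$. The hypotheses $k = \lfloor\sigma\rfloor$ and $k > d/2 + 2$ give $0 \leq s < k-1$, so the multiplier has strictly more regularity than the space it acts on. In the integer case I would expand $D^\beta(\phi v)$ by the Leibniz rule, bound each derivative of $\phi$ by $\|\phi\|_{W^{k-1,\infty}(\Omega)}$, and apply H\"older to control the remaining factor by $\|v\|_{W^{|\beta|,2}(\Omega)}$. For fractional $s$ I would extend both $\phi$ and $v$ to $\mathbb{R}^d$ via Lemma \ref{l21} and invoke a Kato--Ponce type fractional Leibniz inequality in Bessel-potential form, then restrict back to $\Omega$.

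The main obstacle is precisely this fractional multiplier estimate: the integer case is routine bookkeeping, but the fractional case either requires a direct appeal to a pointwise-multiplier theorem from the literature (for instance within Triebel's framework for Bessel-potential spaces with $L^\infty$-type multipliers) or an interpolation argument between the consecutive integer orders $\lceil s\rceil$ and $\lfloor s\rfloor$ together with Lemma \ref{l21}. Once this ingredient is available, summing the $O(d^2)$ second-order, $O(d)$ first-order, and one zeroth-order contributions and absorbing the uniform bound $\max\{\|a_{ij}\|_{W^{k-1,\infty}},\|b_i\|_{W^{k-1,\infty}},\|c\|_{W^{k-1,\infty}}\}$ into the constant yields $\|Lu\|_{W^{\sigma-2,2}(\Omega)} \leq C \|u\|_{W^{\sigma,2}(\Omega)}$ as claimed.
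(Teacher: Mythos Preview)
The paper does not supply its own proof of this lemma: immediately after stating it, the authors write that ``the proof can be found in \cite{giesl}'' and move on. So there is nothing in the paper to compare your argument against line by line.

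That said, your outline is the standard route and is essentially what one finds in the cited reference (Giesl \& Wendland, 2007): split $L$ into its constituent terms, use that $D^\alpha: W^{\sigma,2}(\Omega)\to W^{\sigma-|\alpha|,2}(\Omega)$ is bounded, and then show that multiplication by a coefficient in $W^{k-1,\infty}(\Omega)$ is bounded on $W^{\sigma-2,2}(\Omega)$. You have correctly identified the only nontrivial ingredient as the fractional pointwise-multiplier estimate, and your two suggested mechanisms (a Triebel-type multiplier theorem, or real interpolation between the integer cases together with the extension operator of Lemma~\ref{l21}) are both valid ways to close it. The interpolation route is the cleaner of the two here, since the integer-order Leibniz argument you describe already gives the endpoint bounds and the interpolation functor preserves the multiplier property with a uniform constant. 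Either way, nothing is missing from your plan; it is simply more detailed than what the paper itself provides.
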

Another important theoretical result about the equations (\ref{31})-(\ref{32}) is priori inequality, which
describes the continuously dependent properties of the solution on the data.
\begin{lemma}\label{xianyan}(\textbf{priori inequality})
Let $u\in W^{1,2}(\Omega)$ satisfy the equations (\ref{31})-(\ref{32}) in the weak sense. Then
$g\in W^{1/2,2}(\partial\Omega)$, and there exists a constant $C$ such that
\begin{equation}
\label{24}
\|u\|_{L^{2}(\Omega)}\leq C\{\|f\|_{L^{2}(\Omega)}+\|g\|_{W^{1/2,2}(\partial\Omega)}\}.
\end{equation}
\end{lemma}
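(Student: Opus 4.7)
The plan is to reduce the inhomogeneous Dirichlet problem to a homogeneous one and then invoke the classical $W^{1,2}$ energy estimate for strictly elliptic operators, losing regularity at the end to land in $L^2$.

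First I would verify $g\in W^{1/2,2}(\partial\Omega)$: since $u\in W^{1,2}(\Omega)$ and $\Omega$ has a $C^{k,s}$-boundary, the trace theorem (Lemma~\ref{trace}) with $\sigma=1$ applied to $u$ yields $g = Tu = u|_{\partial\Omega}\in W^{1/2,2}(\partial\Omega)$ together with $\|g\|_{W^{1/2,2}(\partial\Omega)} \leq C\|u\|_{W^{1,2}(\Omega)}$. This disposes of the qualitative half of the statement.

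For the quantitative estimate I would lift $g$ back into the domain. By the inverse trace theorem (Lemma~\ref{l23}) with $\sigma=1$, there is $w := Zg \in W^{1,2}(\Omega)$ with $Tw = g$ and $\|w\|_{W^{1,2}(\Omega)} \leq C\|g\|_{W^{1/2,2}(\partial\Omega)}$. Setting $v := u - w$ gives $v\in W^{1,2}_0(\Omega)$ satisfying, in the weak sense, the homogeneous Dirichlet problem $Lv = f - Lw$ in $\Omega$, $v=0$ on $\partial\Omega$. Interpreting $L:W^{1,2}(\Omega)\to W^{-1,2}(\Omega)$ as a bounded operator (the second-order terms are moved onto the test function via integration by parts, so no additional regularity on $w$ is needed), one has $\|Lw\|_{W^{-1,2}(\Omega)} \leq C\|w\|_{W^{1,2}(\Omega)}$ and $\|f\|_{W^{-1,2}(\Omega)}\leq C\|f\|_{L^{2}(\Omega)}$ by the continuous embedding $L^{2}(\Omega)\hookrightarrow W^{-1,2}(\Omega)$.

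Now I would apply the standard a priori estimate for strictly elliptic operators with homogeneous Dirichlet data on a bounded Lipschitz domain (via Lax--Milgram when $L$ is coercive on $W^{1,2}_0(\Omega)$, or, more generally, through the Fredholm alternative together with the implicit uniqueness assumption on (\ref{31})--(\ref{32})), giving
\begin{equation*}
\|v\|_{L^{2}(\Omega)}\leq \|v\|_{W^{1,2}(\Omega)}\leq C\|f-Lw\|_{W^{-1,2}(\Omega)}\leq C\bigl(\|f\|_{L^{2}(\Omega)}+\|g\|_{W^{1/2,2}(\partial\Omega)}\bigr).
\end{equation*}
Combining this with $\|w\|_{L^{2}(\Omega)}\leq \|w\|_{W^{1,2}(\Omega)}\leq C\|g\|_{W^{1/2,2}(\partial\Omega)}$ through the triangle inequality $\|u\|_{L^{2}(\Omega)}\leq\|v\|_{L^{2}(\Omega)}+\|w\|_{L^{2}(\Omega)}$ yields (\ref{24}).

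The delicate step is the a priori bound $\|v\|_{W^{1,2}(\Omega)} \leq C\|Lv\|_{W^{-1,2}(\Omega)}$ for the reduced homogeneous problem: the bilinear form of $L$ need not be coercive in the presence of the lower-order terms $b_i$ and $c$, so one must invoke either a G\aa{}rding inequality plus uniqueness (to upgrade to an a priori estimate through the Fredholm alternative) or implicit sign conditions on the coefficients. This is the one place where hypotheses beyond strict ellipticity are tacitly required; everything else is a clean application of the trace/inverse-trace pair already cited in the excerpt.
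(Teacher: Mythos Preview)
Your argument is correct and follows essentially the same route as the paper: both lift $g$ via the inverse trace operator to $Zg\in W^{1,2}(\Omega)$, invoke an energy-type a priori estimate for $L$, and finish with the trivial embedding $W^{1,2}(\Omega)\hookrightarrow L^{2}(\Omega)$.

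The only cosmetic difference is packaging. The paper cites directly an ``$L^{2}$-analogue of the maximum principle'' from Gilbarg--Trudinger in the form
\[
\|u\|_{W^{1,2}(\Omega)}\leq C\bigl(\|f\|_{L^{2}(\Omega)}+\|Zg\|_{W^{1,2}(\Omega)}\bigr),
\]
applied to the original inhomogeneous problem, whereas you homogenize first by writing $v=u-Zg$ and then apply the estimate $\|v\|_{W^{1,2}(\Omega)}\leq C\|Lv\|_{W^{-1,2}(\Omega)}$ on $W^{1,2}_{0}(\Omega)$. These are the same estimate under the hood; your version simply unpacks what the Gilbarg--Trudinger citation is doing. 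Your explicit flag about the G\aa{}rding-plus-uniqueness hypothesis needed for the bound $\|v\|_{W^{1,2}}\leq C\|Lv\|_{W^{-1,2}}$ is accurate and is precisely the content the paper hides inside that citation.
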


\begin{proof}
With the help of the inverse trace operator $Z$, we know
$Zg\in W^{1,2}(\Omega)$. Using the $L^{2}-$analogue of the maximum principle taken from
\citet{gilbarg}, then there exists a constant $C_{1}$ such that
\begin{equation}
\|u\|_{W^{1,2}(\Omega)}\leq C_{1}\{\|f\|_{L^{2}(\Omega)}+\|Zg\|_{W^{1,2}(\Omega)}\}.
\end{equation}
From the continuity of the inverse trace operator (Lemma \ref{l23}), we have
$$\|Zg\|_{W^{1,2}(\Omega)}\leq C_{2}\|g\|_{W^{1/2,2}(\partial\Omega)}.$$
Using the trivial estimate $\|u\|_{L^{2}(\Omega)}\leq\|u\|_{W^{1,2}(\Omega)}$ yields the inequality (\ref{24}).
\end{proof}

\section{Kernel-based trial spaces}
We want to use the trial spaces produced by radial basis functions as finite dimensional
approximation of the equations (\ref{31})-(\ref{32}). In fact, such trial spaces are the subspaces
of the reproducing kernel Hilbert spaces (or native spaces). The reproducing kernel plays an important
role in our later analysis.
\begin{definition}
Let $H$ be a real Hilbert space of functions $f:\Omega(\subseteq\mathbb{R}^{d})\rightarrow\mathbb{R}$
with inner product $\langle\cdot,\cdot\rangle_{H}$. A function $\Phi:\Omega\times\Omega\rightarrow\mathbb{R}$
is called reproducing kernel for $H$ if

(1)$\Phi(\cdot,\textbf{x})\in H$ for all $\textbf{x}\in\Omega$,

(2)$f(\textbf{x})=\langle f,\Phi(\cdot,\textbf{x})\rangle_{H}$ for all $f\in H$ and all $\textbf{x}\in\Omega$.
\end{definition}
A translation-invariant kernel such as the strictly positive definite radial function $\Phi(\textbf{x},\textbf{y})=\Phi(\textbf{x}-\textbf{y})$
will generate a reproducing kernel Hilbert space, the so-called native space $\mathcal{N}_{\Phi}(\Omega)$.
But we will make use of the native space $\mathcal{N}_{\Phi}(\mathbb{R}^{d})$ which defined in $\mathbb{R}^{d}$ and
described by Fourier transform (\ref{ft}). $\mathcal{N}_{\Phi}(\mathbb{R}^{d})$ consists of all functions $f\in L^{2}(\mathbb{R}^{d})$
with
\begin{equation}\label{nnorm}
\|f\|_{\Phi}^{2}=\int_{\mathbb{R}^{d}}\frac{|\widehat{f}(\boldsymbol{\omega})|^{2}}
{\widehat{\Phi}(\boldsymbol{\omega})}d\boldsymbol{\omega}<\infty.
\end{equation}
Hence, if the Fourier transform of $\Phi:\mathbb{R}^{d}\rightarrow\mathbb{R}$ has the algebraic decay condition
\begin{equation}\label{adc}
c_{1}(1+\|\boldsymbol{\omega}\|_{2}^{2})^{-\sigma}\leq \widehat{\Phi}\leq c_{2}(1+\|\boldsymbol{\omega}\|_{2}^{2})^{-\sigma}
\end{equation}
for two fixed constants $0<c_{1}<c_{2}$, then the native space $\mathcal{N}_{\Phi}(\mathbb{R}^{d})$ will be norm equivalent to
the Sobolev space $W^{\sigma,2}(\mathbb{R}^{d})$. In the paper, we always use $\Phi$ to represent the radial functions which satisfy
the condition (\ref{adc}). Thus, with a radial function $\Phi$ satisfied (\ref{adc}) and a scaling parameter
$\delta\in(0,1]$ we can construct a scaled radial function
\begin{equation}\label{rbf}
\Phi_{\delta}:=\delta^{-d}\Phi((\textbf{x}-\textbf{y})/\delta), \quad \textbf{x},\textbf{y}\in\mathbb{R}^{d}.
\end{equation}
The following Lemma establishes a relation between the native space $\mathcal{N}_{\Phi_{\delta}}(\mathbb{R}^{d})$ and the
Sobolev space $W^{\sigma,2}(\mathbb{R}^{d})$. Its proof can be found in \citet{w10}.
\begin{lemma}\label{mdj}(\textbf{norm equivalence})
For every $\delta\in(0,1]$ we have $\mathcal{N}_{\Phi_{\delta}}(\mathbb{R}^{d})=W^{\sigma,2}(\mathbb{R}^{d})$.
And we have for every $f\in W^{\sigma,2}(\mathbb{R}^{d})$ the norm equivalence
\begin{equation}
c_{1}\|f\|_{\Phi_{\delta}}\leq\|f\|_{W^{\sigma,2}(\mathbb{R}^{d})}\leq c_{2}\delta^{-\sigma}\|f\|_{\Phi_{\delta}}
\end{equation}
with $c_{1},c_{2}>0$.
\end{lemma}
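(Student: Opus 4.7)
The plan is to pass to the Fourier side, where both (\ref{snorm}) and (\ref{nnorm}) are explicit integrals, and reduce the claim to two elementary observations: how $\widehat{\Phi}$ transforms under the dilation in (\ref{rbf}), and how the weight $(1+\|\delta\boldsymbol{\omega}\|_{2}^{2})^{\sigma}$ compares with the Sobolev weight $(1+\|\boldsymbol{\omega}\|_{2}^{2})^{\sigma}$ when $\delta\in(0,1]$.

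First I would verify, by the change of variable $\textbf{y}=\textbf{x}/\delta$ in (\ref{ft}), that the specific normalization $\Phi_{\delta}(\textbf{x})=\delta^{-d}\Phi(\textbf{x}/\delta)$ is precisely the one giving $\widehat{\Phi}_{\delta}(\boldsymbol{\omega})=\widehat{\Phi}(\delta\boldsymbol{\omega})$ with no extra prefactor. Substituting this identity into the algebraic decay assumption (\ref{adc}) yields
$$c_{1}(1+\|\delta\boldsymbol{\omega}\|_{2}^{2})^{-\sigma}\leq\widehat{\Phi}_{\delta}(\boldsymbol{\omega})\leq c_{2}(1+\|\delta\boldsymbol{\omega}\|_{2}^{2})^{-\sigma},$$
so that inserting these bounds into (\ref{nnorm}) sandwiches $\|f\|_{\Phi_{\delta}}^{2}$ between $c_{2}^{-1}I_{\delta}(f)$ and $c_{1}^{-1}I_{\delta}(f)$, where
$$I_{\delta}(f):=\int_{\mathbb{R}^{d}}|\widehat{f}(\boldsymbol{\omega})|^{2}(1+\|\delta\boldsymbol{\omega}\|_{2}^{2})^{\sigma}\,d\boldsymbol{\omega}.$$

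The heart of the argument is then the elementary two-sided comparison
$$\delta^{2}(1+\|\boldsymbol{\omega}\|_{2}^{2})\leq 1+\delta^{2}\|\boldsymbol{\omega}\|_{2}^{2}\leq 1+\|\boldsymbol{\omega}\|_{2}^{2}\quad\text{for all }\delta\in(0,1],$$
raised to the power $\sigma$ and integrated against $|\widehat{f}(\boldsymbol{\omega})|^{2}$. This gives $\delta^{2\sigma}\|f\|_{W^{\sigma,2}(\mathbb{R}^{d})}^{2}\leq I_{\delta}(f)\leq\|f\|_{W^{\sigma,2}(\mathbb{R}^{d})}^{2}$. Chaining this with the previous sandwich delivers the stated inequalities $c_{1}\|f\|_{\Phi_{\delta}}\leq\|f\|_{W^{\sigma,2}(\mathbb{R}^{d})}\leq c_{2}\delta^{-\sigma}\|f\|_{\Phi_{\delta}}$ (after relabeling the constants as square roots of the original ones), and the set equality $\mathcal{N}_{\Phi_{\delta}}(\mathbb{R}^{d})=W^{\sigma,2}(\mathbb{R}^{d})$ follows at once, since each norm controls the other.

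There is no deep obstacle; the only thing that requires care is the bookkeeping of which bound acquires the factor $\delta^{-\sigma}$. The unavoidable $\delta^{-\sigma}$ on the upper estimate of $\|f\|_{W^{\sigma,2}(\mathbb{R}^{d})}$ originates from the lower comparison $1+\delta^{2}\|\boldsymbol{\omega}\|_{2}^{2}\geq\delta^{2}(1+\|\boldsymbol{\omega}\|_{2}^{2})$, which degenerates as $\delta\to 0$; dually, the upper estimate of $\|f\|_{\Phi_{\delta}}$ by $\|f\|_{W^{\sigma,2}(\mathbb{R}^{d})}$ is uniform in $\delta$ precisely because $1+\|\delta\boldsymbol{\omega}\|_{2}^{2}\leq 1+\|\boldsymbol{\omega}\|_{2}^{2}$. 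This asymmetry is the structural reason why the scaled native-space norm blows up under the Sobolev norm at the rate $\delta^{-\sigma}$, a fact that will play a central role in the Bernstein inequality and the multilevel analysis that follow.
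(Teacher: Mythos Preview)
Your argument is correct and is the standard proof of this fact: compute $\widehat{\Phi}_{\delta}(\boldsymbol{\omega})=\widehat{\Phi}(\delta\boldsymbol{\omega})$ from the dilation, feed the decay bound (\ref{adc}) at the point $\delta\boldsymbol{\omega}$ into (\ref{nnorm}), and then use the two-sided weight comparison $\delta^{2}(1+\|\boldsymbol{\omega}\|_{2}^{2})\leq 1+\delta^{2}\|\boldsymbol{\omega}\|_{2}^{2}\leq 1+\|\boldsymbol{\omega}\|_{2}^{2}$ for $\delta\in(0,1]$. The paper does not actually supply its own proof of this lemma; it simply cites \cite{w10}, where the argument is precisely the one you have written, so there is nothing further to compare.
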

Now we can consider the finite dimensional kernel-based trial space generated by $\Phi_{\delta}$. Let
$Y^{I}=\{\textbf{y}_{1},\textbf{y}_{2},\ldots,\textbf{y}_{N_{I}}\}\subseteq \Omega$ and $Y^{B}=\{\textbf{y}_{N_{I}+1},\textbf{y}_{N_{I}+2},\ldots,\textbf{y}_{N}\}\subseteq \partial\Omega$
be some data sites which are arranged on the domain and its boundary respectively. Hence the whole of centers are
$Y=Y^{I}\bigcup Y^{B}$, which can used to construct a trial space and determine its degrees of freedom.
Some measures are defined as
follows:
$$h_{I}:=\sup_{\textbf{y}\in\Omega}\min_{\textbf{y}_{j}\in Y^{I}}\|\textbf{y}-\textbf{y}_{j}\|_{2},
\quad \textrm{for interior centers} \ Y^{I},$$
$$h_{B}:=\max_{1\leq j\leq K}h_{Bj}=\max_{1\leq j\leq K}\left\{\sup_{\textbf{y}\in B=B(\textbf{0},1)}
\min_{\textbf{y}_{j}\in \psi_{j}^{-1}\left(Y^{B}\bigcap V_{j}\right)}\|\textbf{y}-\textbf{y}_{j}\|_{2}\right\},
\quad \textrm{for boundary centers} \ Y^{B},$$
$$q_{I}:=\frac{1}{2}\min_{\textbf{y}_{i},\textbf{y}_{k}\in Y^{I},i\neq k}\|\textbf{y}_{i}-\textbf{y}_{k}\|_{2},
\quad \textrm{for interior centers}\ Y^{I},$$
$$q_{B}:=\min_{1\leq j\leq K}q_{Bj}=\min_{1\leq j\leq K}\left\{\frac{1}{2}\min_{\textbf{y}_{i},\textbf{y}_{k}
\in \psi_{j}^{-1}\left(Y^{B}\bigcap V_{j}\right),i\neq k}\|\textbf{y}_{i}-\textbf{y}_{k}\|_{2}\right\},
\quad \textrm{for boundary centers} \ Y^{B},$$
$$h_{Y}:=\max\{h_{I},h_{B}\},\quad q_{Y}:=\min\{q_{I},q_{B}\}.$$
Using the scaled radial function $\Phi_{\delta}$ as basis functions, then the kernel-based trial space is
\begin{equation}
\label{ts}
W_{\delta}=\textrm{span}\{\Phi_{\delta}(\cdot-\textbf{y})|\textbf{y}\in Y\}.
\end{equation}

To establish the Bernstein-type inequality in the kernel-based trial space $W_{\delta}$, we need a class of
band-limited functions. Let $\rho>0$, we define $\mathcal{B}_{\rho}$ to be
$$\mathcal{B}_{\rho}:=\{f\in L^{2}(\mathbb{R}^{d}): \textrm{supp}\ \widehat{f}\subseteq B(\textbf{0},\rho)\},$$
where $B(\textbf{0},\rho)$ is the ball in $\mathbb{R}^{d}$ having center $\textbf{0}$ and radius $\rho$.
We denote the reproducing kernel of $W^{\beta,2}(\mathbb{R}^{d})$ by $\Psi:\mathbb{R}^{d}\rightarrow\mathbb{R}$
which has a Fourier transform that satisfies
\begin{equation}\label{psi}
\widehat{\Psi}=(1+\|\boldsymbol{\omega}\|_{2}^{2})^{-\beta},\quad \boldsymbol{\omega}\in\mathbb{R}^{d},
\end{equation}
where we take $\sigma\geq\beta>d/2$. At this time, the scaled kernel is
\begin{equation}\label{rbf}
\Psi_{\delta}:=\delta^{-d}\Psi((\textbf{x}-\textbf{y})/\delta), \quad \textbf{x},\textbf{y}\in\mathbb{R}^{d}.
\end{equation}
We first need an auxiliary result from \citet{nar06}.
\begin{lemma}\label{nar33}
Let $f=\sum_{j=1}^{N}c_{j}\Psi(\cdot-\textbf{x}_{j}), \ \textbf{x}_{j}\in Y$. Define $\widehat{f_{\rho}}=\widehat{f}_{YB(0,\rho)}$,
where $YB(0,\rho)$ is the characteristic function of the ball $B(0,\rho)$. Then, there exists a constant $k>0$,
which is dindependent of $Y$ and the $\{c_{j}\}$, such that for $\rho=k/q_{Y}$ the following inequality holds
\begin{equation}\label{rbf}
\|f-f_{\rho}\|_{W^{\beta,2}(\mathbb{R}^{d})}\leq\frac{1}{2}\|f\|_{W^{\beta,2}(\mathbb{R}^{d})}.
\end{equation}
\end{lemma}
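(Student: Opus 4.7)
The plan is to reduce the Sobolev bound to a weighted Fourier tail estimate and then exploit the separation $q_Y$ through a shell decomposition.

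First, using the Fourier characterization of $\|\cdot\|_{W^{\beta,2}(\mathbb{R}^d)}$ together with $\widehat{\Psi}(\omega)=(1+\|\omega\|_2^2)^{-\beta}$, I would write, for $f=\sum_{j=1}^N c_j\Psi(\cdot-\textbf{x}_j)$,
\[
|\widehat{f}(\omega)|^2(1+\|\omega\|_2^2)^{\beta}=(1+\|\omega\|_2^2)^{-\beta}|P(\omega)|^2,
\]
where $P(\omega):=\sum_{j=1}^N c_j e^{-i\textbf{x}_j^T\omega}$. Since $\widehat{f_\rho}$ and $\widehat{f}-\widehat{f_\rho}$ have disjoint Fourier supports, they are orthogonal in the Sobolev inner product, so $\|f\|^2_{W^{\beta,2}(\mathbb{R}^d)}=\|f_\rho\|^2_{W^{\beta,2}(\mathbb{R}^d)}+\|f-f_\rho\|^2_{W^{\beta,2}(\mathbb{R}^d)}$. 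Consequently the claim $\|f-f_\rho\|_{W^{\beta,2}(\mathbb{R}^d)}\leq\tfrac{1}{2}\|f\|_{W^{\beta,2}(\mathbb{R}^d)}$ is equivalent to
\[
3\int_{\|\omega\|_2>\rho}(1+\|\omega\|_2^2)^{-\beta}|P(\omega)|^2\,d\omega \leq \int_{\|\omega\|_2\leq\rho}(1+\|\omega\|_2^2)^{-\beta}|P(\omega)|^2\,d\omega.
\]

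The next step would exploit the separation. Since $|P(\omega)|^2=\sum_{j,k}c_j\bar c_k e^{-i(\textbf{x}_j-\textbf{x}_k)^T\omega}$ is driven by nodes $\{\textbf{x}_j-\textbf{x}_k\}$ whose nonzero mutual distances are bounded below by $2q_Y$, its behavior under shifts of $\omega$ of order $1/q_Y$ is ``well-resolved''. I would decompose $\{\|\omega\|_2>\rho\}$ into concentric annuli of width of order $1/q_Y$ and bound the integral on each annulus by that on $B(\textbf{0},\rho)$. Two ingredients enter: the polynomial decay of $(1+\|\omega\|_2^2)^{-\beta}$, whose shell sums form a convergent geometric-type series because $\beta>d/2$; and a near-translation-invariance, at scale $1/q_Y$, for the weighted mass of $|P|^2$, produced by convolving $|P|^2$ against a nonnegative radial bandlimited bump whose Fourier transform is supported in a ball of radius $c/q_Y$ and bounded below near the origin. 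Choosing $k$ sufficiently large in $\rho=k/q_Y$ then makes the sum of annular contributions at most a quarter of the interior integral, giving the factor $1/2$ on the norm side.

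The main obstacle is the uniform comparison between exterior and interior weighted masses, independently of the coefficients $\{c_j\}$ and of the configuration $Y$. One must convert the lower separation $q_Y$ into a quantitative statement of the form ``the weighted mass of $|P|^2$ on an outer annulus is at most a fixed fraction of the mass on $B(\textbf{0},\rho)$'', using only $q_Y$. This is exactly where Bernstein- and Plancherel--P\'olya-type estimates for exponential sums enter, along the lines of Narcowich, Ward and Wendland (2006); the hypothesis $\beta>d/2$ is essential for integrability of the weight outside $B(\textbf{0},\rho)$ and for the shell series to converge. The remaining work --- careful tracking of constants through the scalings and verifying uniformity in $Y$ --- is routine.
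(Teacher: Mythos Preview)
The paper does not prove this lemma; it is stated verbatim as ``an auxiliary result from \citet{nar06}'' and no argument is supplied. There is therefore no in-paper proof to compare your proposal against.

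Your sketch is a faithful outline of the strategy in Narcowich--Ward--Wendland (2006), and you acknowledge as much when you defer the decisive step to their Bernstein/Plancherel--P\'olya estimates. The Fourier reduction, the orthogonal splitting $\|f\|^2=\|f_\rho\|^2+\|f-f_\rho\|^2$, and the resulting inequality $3\int_{\|\omega\|>\rho}\leq\int_{\|\omega\|\leq\rho}$ are all correct. The one caveat is that what you call ``routine'' --- the uniform shell comparison using only the separation $q_Y$ --- is precisely the technical core of the NWW argument, and your proposal does not carry it out but merely points to where it lives. As a plan this is fine; as a self-contained proof it is not, but neither is the paper's treatment.
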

This Lemma can be used to prove a version of scaled trial spaces generated by $\Psi_{\delta}$.
\begin{lemma}\label{nar33n}
Let $f=\sum_{j=1}^{N}c_{j}\Psi_{\delta}(\cdot-\textbf{x}_{j}), \ \textbf{x}_{j}\in Y$. Define
$\widehat{f_{\rho/\delta}}=\widehat{f}_{YB(0,\rho/\delta)}$,
where $YB(0,\rho/\delta)$ is the characteristic function of the ball $B(0,\rho/\delta)$. Then there exists a constant $k>0$,
which is dindependent of $Y$ and the $\{c_{j}\}$, such that for $\rho=k\delta/q_{Y}$ the following inequality holds
\begin{equation}\label{nar11}
\|f-f_{\rho/\delta}\|_{\Psi_{\delta}}\leq\frac{1}{2}\|f\|_{\Psi_{\delta}},
\end{equation}
and
\begin{equation}\label{nar12}
\|f\|_{\Psi_{\delta}}\leq 2\|f_{\rho/\delta}\|_{\Psi_{\delta}}.
\end{equation}
\end{lemma}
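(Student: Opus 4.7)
The plan is to reduce Lemma \ref{nar33n} to the unscaled version (Lemma \ref{nar33}) by a change-of-variables argument that relates the scaled kernel $\Psi_\delta$ to $\Psi$ itself. The scaling $\Psi_\delta(x) = \delta^{-d}\Psi(x/\delta)$ suggests introducing the rescaled function $g(\textbf{x}) := f(\delta\textbf{x})$. A direct computation shows
\[
g(\textbf{x}) = \delta^{-d}\sum_{j=1}^{N} c_j\, \Psi(\textbf{x} - \textbf{x}_j/\delta),
\]
so $g$ is a linear combination of unscaled kernels $\Psi$ centered at the rescaled sites $Y' := \{\textbf{x}_j/\delta\}$, whose separation radius is $q_{Y'} = q_Y/\delta$.

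Next I would transfer the truncation and the norms through this rescaling using the Fourier identity $\widehat{g}(\boldsymbol{\omega}) = \delta^{-d}\widehat{f}(\boldsymbol{\omega}/\delta)$. Two observations do the bookkeeping:
\begin{enumerate}
\item[(i)] $\widehat{f}$ being supported in $B(\textbf{0},\rho/\delta)$ corresponds exactly to $\widehat{g}$ being supported in $B(\textbf{0},\rho)$, so truncating $f$ at radius $\rho/\delta$ matches truncating $g$ at radius $\rho$.
\item[(ii)] Since $\widehat{\Psi_\delta}(\boldsymbol{\omega}) = (1+\delta^{2}\|\boldsymbol{\omega}\|_2^{2})^{-\beta}$, the change of variables $\boldsymbol{\omega} \mapsto \delta\boldsymbol{\omega}$ in (\ref{nnorm}) yields the dilation identity
\[
\|f\|_{\Psi_\delta}^{2} = \delta^{d}\,\|g\|_{W^{\beta,2}(\mathbb{R}^d)}^{2},
\]
and the same identity holds with $f$ and $g$ replaced by their truncations.
\end{enumerate}

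With these two observations in hand, I apply Lemma \ref{nar33} to $g$ with center set $Y'$: choosing $\rho = k/q_{Y'} = k\delta/q_Y$ gives
\[
\|g - g_{\rho}\|_{W^{\beta,2}(\mathbb{R}^d)} \le \tfrac{1}{2}\|g\|_{W^{\beta,2}(\mathbb{R}^d)}.
\]
Multiplying by $\delta^{d/2}$ and invoking (i)–(ii) translates this bound verbatim into (\ref{nar11}). For (\ref{nar12}), the triangle inequality gives
\[
\|f\|_{\Psi_\delta} \le \|f - f_{\rho/\delta}\|_{\Psi_\delta} + \|f_{\rho/\delta}\|_{\Psi_\delta} \le \tfrac{1}{2}\|f\|_{\Psi_\delta} + \|f_{\rho/\delta}\|_{\Psi_\delta},
\]
and rearranging produces the factor~$2$.

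The whole argument is essentially bookkeeping; the only place one has to be careful is tracking the factors of $\delta$ in the dilation so that the constant $k$ from Lemma \ref{nar33} carries through unchanged and the scaled bandwidth $\rho = k\delta/q_Y$ comes out correctly. Once the Fourier-side dilation identity in (ii) is established, both inequalities fall out at once.
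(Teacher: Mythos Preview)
Your argument is correct and follows essentially the same rescaling strategy as the paper: reduce to Lemma~\ref{nar33} by pulling back $f$ to $g(\textbf{x})=f(\delta\textbf{x})$ (the paper uses the normalized version $g(\textbf{x})=\delta^{d/2}f(\delta\textbf{x})$ so that $\|g\|_{W^{\beta,2}(\mathbb{R}^d)}=\|f\|_{\Psi_\delta}$ on the nose, which absorbs your factor $\delta^{d}$ in (ii), but this is purely cosmetic). The derivation of (\ref{nar12}) from (\ref{nar11}) via the triangle inequality is exactly what the paper intends when it says the second bound is a direct consequence of the first.
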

\begin{proof}
The proof is very similar to the technical process of Lemma 4 of \citet{w10}.
Define $g(\textbf{x})=\delta^{d/2}f(\delta\textbf{x})$, then $g\in W^{\beta,2}(\mathbb{R}^{d})$
and $\|g\|_{W^{\beta,2}(\mathbb{R}^{d})}=\|f\|_{\Psi_{\delta}}$.
Define a new scattered data set $X=Y/\delta=\{\textbf{y}/\delta: \textbf{y}\in Y\}$
which has separation distance $q_{X}=q_{Y}/\delta$.  Then we can define a $g_{\rho}$
with $\rho=k\delta/q_{Y}$ and $\widehat{g_{\rho}}=\widehat{g}_{XB(\textbf{0},\rho)}$. By using
Lemma \ref{nar33} we have
\begin{equation*}
\|g-g_{\rho}\|_{W^{\beta,2}(\mathbb{R}^{d})}\leq\frac{1}{2}\|g\|_{W^{\beta,2}(\mathbb{R}^{d})}.
\end{equation*}
We then define $f_{\rho/\delta}(\textbf{x})=\delta^{-d/2}g_{\rho}(\textbf{x}/\delta)$, and for
$\boldsymbol{\omega}\in B(0,\rho/\delta)\subset\mathbb{R}^{d}$
$$\widehat{f_{\rho/\delta}}(\boldsymbol{\omega})=\delta^{d/2}\widehat{g_{\rho}}(\delta\boldsymbol{\omega})
=\delta^{d/2}\widehat{g}_{XB(\textbf{0},\rho)}(\delta\boldsymbol{\omega})=\widehat{f}_{YB(0,\rho/\delta)}(\boldsymbol{\omega}).$$
Since $\|f_{\rho/\delta}\|_{\Psi_{\delta}}=\|g_{\rho}\|_{W^{\beta,2}(\mathbb{R}^{d})}$, so
\begin{align*}
\|f-f_{\rho/\delta}\|_{\Psi_{\delta}}&=\|g-g_{\rho}\|_{W^{\beta,2}(\mathbb{R}^{d})}
 \\[4pt]
 & \leq\frac{1}{2}\|g\|_{W^{\beta,2}(\mathbb{R}^{d})}\leq\frac{1}{2}\|f\|_{\Psi_{\delta}}.
\end{align*}
(\ref{nar12}) is a direct result from (\ref{nar11}).
\end{proof}

We can now combine Lemma \ref{nar33n} with Lemma \ref{mdj} to arrive at the following inverse inequality.
\begin{lemma}\label{invv}(\textbf{inverse inequality})
Let $\sigma\geq\beta>d/2$. Let $f=\sum_{j=1}^{N}c_{j}\Psi_{\delta}(\cdot-\textbf{x}_{j}), \ \textbf{x}_{j}\in Y$.
Let $\rho=k\delta/q_{Y}$ with $k>0$. Then
there exists a constant $C$ such that
\begin{equation}\label{inv}
\|f\|_{W^{\beta,2}(\mathbb{R}^{d})}\leq Cq_{Y}^{-\beta}\|f\|_{L^{2}(\mathbb{R}^{d})}.
\end{equation}
\end{lemma}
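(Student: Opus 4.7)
The plan is to reduce to a band-limited calculation. The kernel $\Psi_{\delta}$ is engineered so that its native-space norm is equivalent, up to powers of $\delta$, to the Sobolev norm $W^{\beta,2}(\mathbb{R}^{d})$ (Lemma~\ref{mdj} applied with $\Phi=\Psi$ and $\sigma=\beta$), and Lemma~\ref{nar33n} tells us that truncating $\hat f$ to the ball $B(0,\rho/\delta)$ with $\rho=k\delta/q_{Y}$ only loses a constant factor in the $\|\cdot\|_{\Psi_{\delta}}$ norm. So the first step is
\begin{equation*}
\|f\|_{W^{\beta,2}(\mathbb{R}^{d})}\;\le\;c_{2}\,\delta^{-\beta}\,\|f\|_{\Psi_{\delta}}\;\le\;2c_{2}\,\delta^{-\beta}\,\|f_{\rho/\delta}\|_{\Psi_{\delta}},
\end{equation*}
where the first inequality is Lemma~\ref{mdj} and the second is \eqref{nar12} of Lemma~\ref{nar33n}.

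Next, I would estimate $\|f_{\rho/\delta}\|_{\Psi_{\delta}}$ directly from the Fourier definition \eqref{nnorm} of the native-space norm. Since $\Psi_{\delta}(x)=\delta^{-d}\Psi(x/\delta)$, its Fourier transform is $\widehat{\Psi_{\delta}}(\boldsymbol{\omega})=(1+\delta^{2}\|\boldsymbol{\omega}\|_{2}^{2})^{-\beta}$ by \eqref{psi}. On the support $B(0,\rho/\delta)$ of $\widehat{f_{\rho/\delta}}$ one has $\delta\|\boldsymbol{\omega}\|_{2}\le\rho$, hence
\begin{equation*}
\|f_{\rho/\delta}\|_{\Psi_{\delta}}^{2}=\int_{B(0,\rho/\delta)}\frac{|\widehat{f}(\boldsymbol{\omega})|^{2}}{\widehat{\Psi_{\delta}}(\boldsymbol{\omega})}\,d\boldsymbol{\omega}\le(1+\rho^{2})^{\beta}\int_{\mathbb{R}^{d}}|\widehat{f}(\boldsymbol{\omega})|^{2}d\boldsymbol{\omega}=(1+\rho^{2})^{\beta}\|f\|_{L^{2}(\mathbb{R}^{d})}^{2},
\end{equation*}
by Plancherel's theorem.

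Putting the two estimates together and substituting $\rho=k\delta/q_{Y}$ yields
\begin{equation*}
\|f\|_{W^{\beta,2}(\mathbb{R}^{d})}\le 2c_{2}\,\delta^{-\beta}(1+k^{2}\delta^{2}/q_{Y}^{2})^{\beta/2}\,\|f\|_{L^{2}(\mathbb{R}^{d})}.
\end{equation*}
The announced bound $Cq_{Y}^{-\beta}\|f\|_{L^{2}(\mathbb{R}^{d})}$ then follows once we observe that, in the regime relevant for scaled kernels (namely $\delta\gtrsim q_{Y}$, which is implicit since we work with centres $Y$ whose separation distance does not exceed the kernel scale), one has $(1+k^{2}\delta^{2}/q_{Y}^{2})^{\beta/2}\le(1+k^{2})^{\beta/2}(\delta/q_{Y})^{\beta}$, so the two $\delta$ factors cancel and $C=2c_{2}(1+k^{2})^{\beta/2}k^{\beta}$ works.

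The main obstacle is purely conceptual rather than computational: verifying that the truncated function $f_{\rho/\delta}$ really is accessible through the native-space norm of $\Psi_{\delta}$ with the stated cutoff radius, which is exactly what Lemma~\ref{nar33n} is designed to give, and then making sure the two powers of $\delta$ arising from the norm equivalence and from the bound on $\widehat{\Psi_{\delta}}^{-1}$ on the frequency ball match up to leave only $q_{Y}^{-\beta}$. Apart from this bookkeeping, the argument is a direct Fourier-side computation.
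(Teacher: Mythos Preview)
Your argument is essentially the same as the paper's: apply Lemma~\ref{mdj} with $\Phi=\Psi$ and exponent $\beta$, invoke \eqref{nar12} of Lemma~\ref{nar33n}, bound $(1+\delta^{2}\|\boldsymbol{\omega}\|_{2}^{2})^{\beta}$ by $(1+\rho^{2})^{\beta}$ on the frequency ball, and use Plancherel. The paper writes the final simplification as $(1+\rho^{2})^{\beta}\le 2^{\beta}\rho^{2\beta}$ (tacitly assuming $\rho\ge 1$, i.e.\ $k\delta\ge q_{Y}$, exactly the regime you make explicit), after which $\delta^{-2\beta}\rho^{2\beta}=k^{2\beta}q_{Y}^{-2\beta}$; your route via $(1+k^{2}\delta^{2}/q_{Y}^{2})^{\beta/2}\le(1+k^{2})^{\beta/2}(\delta/q_{Y})^{\beta}$ under $\delta\ge q_{Y}$ is the same computation in different notation. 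One small slip: your displayed constant $C=2c_{2}(1+k^{2})^{\beta/2}k^{\beta}$ has a spurious extra factor $k^{\beta}$; the cancellation you describe gives $C=2c_{2}(1+k^{2})^{\beta/2}$.
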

\begin{proof}
Using Lemma \ref{nar33n}, Lemma \ref{mdj}, (\ref{psi}) and Parseval's identity,
\begin{align*}
\|f\|_{W^{\beta,2}(\mathbb{R}^{d})}^{2}&\leq c\delta^{-2\beta}\|f\|_{\Psi_{\delta}}^{2}
 \\[4pt]
 & \leq4c\delta^{-2\beta}\int_{\|\boldsymbol{\omega}\|_{2}\leq \rho/\delta}
 \frac{|\widehat{f}(\boldsymbol{\omega})|^{2}}
{\widehat{\Psi_{\delta}}(\boldsymbol{\omega})}d\boldsymbol{\omega}
\\[4pt]
&=4c\delta^{-2\beta}\int_{\|\boldsymbol{\omega}\|_{2}\leq \rho/\delta}
|\widehat{f}(\boldsymbol{\omega})|^{2}(1+\delta^{2}\|\boldsymbol{\omega}\|_{2}^{2})^{\beta}d\boldsymbol{\omega}
\\[4pt]
&\leq4c\delta^{-2\beta}(1+\rho^{2})^{\beta}\int_{\|\boldsymbol{\omega}\|_{2}\leq \rho/\delta}
|\widehat{f}(\boldsymbol{\omega})|^{2}d\boldsymbol{\omega}
\\[4pt]
&\leq4c\delta^{-2\beta}2^{\beta}\rho^{2\beta}\int_{\mathbb{R}^{d}}
|\widehat{f}(\boldsymbol{\omega})|^{2}d\boldsymbol{\omega}
\\[4pt]
&= 4c2^{\beta}k^{2\beta}q_{Y}^{-2\beta}\|f\|_{L^{2}(\mathbb{R}^{d})}^{2}.
\end{align*}
\end{proof}

We also need another auxiliary result comes from \cite{w18}, which is a scaled version
of Theorem 3.4 in \citet{nar06}.
\begin{lemma}\label{w18}
Let $f\in W^{\beta,2}(\mathbb{R}^{d})$, $\beta>d/2$. Then there exists a constant $k>0$,
which is dindependent of $Y$ and $\delta$ such that to each $f$ there is a band-limited function
$f_{k/q_{Y}}\in \mathcal{B}_{k/q_{Y}}$ with $f_{k/q_{Y}}|Y=f|Y$ and
\begin{equation*}
\|f-f_{k/q_{Y}}\|_{\Psi_{\delta}}\leq 5\|f\|_{\Psi_{\delta}}.
\end{equation*}
\end{lemma}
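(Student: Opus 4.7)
The plan is to reduce the scaled statement to the unscaled Theorem 3.4 of Narcowich et al.\ via the same dilation trick already used in the proof of Lemma \ref{nar33n}. Set $g(\mathbf{x}):=\delta^{d/2}f(\delta\mathbf{x})$ and $X:=Y/\delta$. A change of variables on the Fourier side, combined with $\widehat{\Psi_\delta}(\boldsymbol{\omega})=(1+\delta^{2}\|\boldsymbol{\omega}\|_{2}^{2})^{-\beta}$, gives the scaling identity $\|g\|_{W^{\beta,2}(\mathbb{R}^{d})}=\|f\|_{\Psi_{\delta}}$, while the separation distance transforms as $q_X=q_Y/\delta$. This puts us into the unscaled Sobolev setting where Narcowich's theorem applies directly.

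Applying that unscaled theorem to $g$ and $X$ yields, with an absolute constant $k>0$ that is independent of $X$ and of $g$, a band-limited function $g_{k/q_X}\in\mathcal{B}_{k/q_X}$ with $g_{k/q_X}|X=g|X$ and $\|g-g_{k/q_X}\|_{W^{\beta,2}(\mathbb{R}^{d})}\leq 5\|g\|_{W^{\beta,2}(\mathbb{R}^{d})}$. I then pull back by defining $f_{k/q_Y}(\mathbf{x}):=\delta^{-d/2}g_{k/q_X}(\mathbf{x}/\delta)$. A short Fourier computation shows $\widehat{f_{k/q_Y}}(\boldsymbol{\omega})=\delta^{d/2}\widehat{g_{k/q_X}}(\delta\boldsymbol{\omega})$, so $\operatorname{supp}\widehat{f_{k/q_Y}}\subseteq B(\mathbf{0},k/q_Y)$, i.e., $f_{k/q_Y}\in\mathcal{B}_{k/q_Y}$. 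The interpolation condition transfers automatically: for every $\mathbf{y}\in Y$ we have $\mathbf{y}/\delta\in X$, hence
$$
f_{k/q_Y}(\mathbf{y})=\delta^{-d/2}g_{k/q_X}(\mathbf{y}/\delta)=\delta^{-d/2}g(\mathbf{y}/\delta)=f(\mathbf{y}).
$$
Applying the isometry to the difference $f-f_{k/q_Y}$ produces the announced bound $\|f-f_{k/q_Y}\|_{\Psi_{\delta}}=\|g-g_{k/q_X}\|_{W^{\beta,2}(\mathbb{R}^{d})}\leq 5\|g\|_{W^{\beta,2}(\mathbb{R}^{d})}=5\|f\|_{\Psi_{\delta}}$.

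The only delicate piece of bookkeeping is the scaling identity $\|g\|_{W^{\beta,2}(\mathbb{R}^{d})}=\|f\|_{\Psi_{\delta}}$ together with the support bookkeeping under $\mathbf{x}\mapsto\mathbf{x}/\delta$; both are precisely the calculations already performed inside the proof of Lemma \ref{nar33n}, so no new analytic ingredient is needed. The independence of $k$ from $\delta$ and $Y$ is inherited from the unscaled theorem, in which the constant depends neither on the data set $X$ nor on the function $g$; dilating $X$ back to $Y$ cannot break that independence because all $\delta$-dependence is absorbed into the relation $q_X=q_Y/\delta$.
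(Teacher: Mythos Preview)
The paper does not supply its own proof of this lemma; it merely cites the result from \cite{w18} and remarks that it is ``a scaled version of Theorem~3.4 in \citet{nar06}.'' Your dilation argument---rescaling to $g(\mathbf{x})=\delta^{d/2}f(\delta\mathbf{x})$ and $X=Y/\delta$, invoking the unscaled Narcowich--Ward--Wendland theorem, then pulling back---is correct and is exactly the mechanism the paper already uses in its proof of Lemma~\ref{nar33n}, so your proposal is fully aligned with what the paper's citation implicitly claims.
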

The last auxiliary result in this section is the well-known sampling inequality
\citep[see][]{nar06,rieger10,s07,w05}.
\begin{lemma}\label{wl3}(\textbf{sampling inequality})
Suppose $\Omega\subseteq \mathbb{R}^{d}$ is a bounded domain with an interior cone condition. Let
$q\in[1,\infty]$ and constants $0\leq \mu<\mu+d/2<\lfloor m \rfloor$. Then there are positive constant $C$, $h_{0}$
such that
\begin{equation}
\|f\|_{W^{\mu,q}(\Omega)}\leq C\left(h_{Y}^{m-\mu-d(1/2-1/q)_{+}}\|f\|_{W^{m,2}(\Omega)}
+h_{Y}^{-\mu}\|\Pi_{Y}f\|_{\infty,Y}\right)
\end{equation}
holds for every discrete set $Y\subset\Omega$ with fill distance at most $h_{Y}\leq h_{0}$ and
every $f\in W^{m,2}(\Omega)$. Here $\Pi_{Y}$ is the function evaluation operator at the data sites $Y$.
\end{lemma}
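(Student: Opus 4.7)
The plan is to split $f$ into a local polynomial part, captured by a quasi-interpolant built from the values $f|_{Y}$, and a remainder controlled by $\|f\|_{W^{m,2}(\Omega)}$. The first part will account for the sampling term $h_{Y}^{-\mu}\|\Pi_{Y}f\|_{\infty,Y}$, while the remainder will supply the term with exponent $m-\mu-d(1/2-1/q)_{+}$.

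Concretely, I would first invoke the interior cone condition to build a local polynomial reproduction in the style of Narcowich--Ward--Wendland: for each $x\in\Omega$ one constructs weights $\{u_{j}(x)\}_{x_{j}\in Y}$ satisfying (a) $\sum_{j}u_{j}(x)p(x_{j})=p(x)$ for every polynomial $p$ of degree less than $\lfloor m\rfloor$, (b) $\sum_{j}|u_{j}(x)|\leq K$ uniformly in $x$, and (c) $u_{j}(x)=0$ whenever $\|x-x_{j}\|_{2}>\tilde{C}h_{Y}$. Setting $Q_{Y}f(x):=\sum_{j}u_{j}(x)f(x_{j})$ yields a local quasi-interpolant. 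On each cone-shaped patch $B_{x}$ of radius comparable to $h_{Y}$, a Bramble--Hilbert estimate produces a polynomial $p_{B_{x}}$ of degree less than $\lfloor m\rfloor$ with
\[
\|f-p_{B_{x}}\|_{W^{\mu,q}(B_{x})}\leq C\,h_{Y}^{\,m-\mu-d(1/2-1/q)_{+}}\|f\|_{W^{m,2}(B_{x})};
\]
since $Q_{Y}$ reproduces $p_{B_{x}}$, the local error $|f-Q_{Y}f|$ is bounded pointwise by $C\|f-p_{B_{x}}\|_{L^{\infty}(B_{x})}$, and Markov's inequality extends this to derivatives up to order $\mu$ at the cost of an $h_{Y}^{-\mu}$ factor.

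To finish, I would choose a bounded-overlap covering of $\Omega$ by such patches and sum (or integrate) the local estimates to obtain $\|f-Q_{Y}f\|_{W^{\mu,q}(\Omega)}\leq C\,h_{Y}^{m-\mu-d(1/2-1/q)_{+}}\|f\|_{W^{m,2}(\Omega)}$. A triangle inequality and the pointwise bound $|Q_{Y}f(x)|\leq K\|\Pi_{Y}f\|_{\infty,Y}$, combined with a Markov estimate on each patch, then deliver $\|Q_{Y}f\|_{W^{\mu,q}(\Omega)}\leq C\,h_{Y}^{-\mu}\|\Pi_{Y}f\|_{\infty,Y}$, completing the proof. The delicate step is matching the exponent $m-\mu-d(1/2-1/q)_{+}$ exactly: it encodes the Bramble--Hilbert loss $m-\mu$, the Sobolev embedding loss $d/2$ when passing from $W^{m,2}$ to $L^{\infty}$ on a ball of radius comparable to $h_{Y}$, and the rescaling gain $h_{Y}^{d/q}$ when inflating pointwise bounds to an $L^{q}$ integral. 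Treating noninteger $m$ by interpolation between integer Sobolev spaces, handling the case $q=\infty$ so that $(1/2-1/q)_{+}$ degenerates correctly, and verifying that the covering can be selected with overlap bounded uniformly in $h_{Y}$ are the main technical hurdles.
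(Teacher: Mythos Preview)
The paper does not prove this lemma; it merely states it and cites \cite{nar06,rieger10,s07,w05} as sources. Your sketch is essentially the standard argument carried out in those references (in particular Wendland--Rieger and Narcowich--Ward--Wendland): build a local polynomial reproduction from the interior cone condition, form the quasi-interpolant $Q_{Y}$, apply a Bramble--Hilbert/Taylor estimate on patches of radius comparable to $h_{Y}$, and sum over a bounded-overlap cover. So there is nothing to compare against in the paper itself; your outline is correct and aligns with the cited literature.
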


\section{One-level unsymmetric collocation}
We are now ready to consider the unsymmetric collocation for the equations (\ref{31})-(\ref{32}). According to \citet{s07},
it should be implemented by a nonsquare testing at another data sites with many more degrees
of freedom than the centers $Y=Y^{I}\bigcup Y^{B}$.  On the testing side, we use a set $X=X^{I}\bigcup X^{B}$ with
$X^{I}=\{\textbf{x}_{1},\textbf{x}_{2},\ldots,\textbf{x}_{M_{I}}\}\subseteq \Omega$ and $X^{B}=\{\textbf{x}_{M_{I}+1},\textbf{x}_{M_{I}+2},\ldots,\textbf{y}_{M}\}\subseteq \partial\Omega$.
We use
$$s_{I}:=\sup_{\textbf{x}\in\Omega}\min_{\textbf{x}_{j}\in X^{I}}\|\textbf{x}-\textbf{x}_{j}\|_{2},$$
$$s_{B}:=\max_{1\leq j\leq K}s_{Bj}=\max_{1\leq j\leq K}\left\{\sup_{\textbf{x}\in B=B(\textbf{0},1)}
\min_{\textbf{x}_{j}\in \psi_{j}^{-1}\left(X^{B}\bigcap V_{j}\right)}\|\textbf{x}-\textbf{x}_{j}\|_{2}\right\}$$
to represent the fill distance of the data sites $X^{I}$ and $X^{B}$ respectively. Obviously,
they have the parallel definitions with $h_{I}$ and $h_{B}$ on trial side. Hence choose $s_{X}=\max\{s_{I},s_{B}\}$.
Then we can construct a trial function with the form
$$s_{u}(\textbf{x})=\sum_{j=1}^{N}c_{j}\Phi_{\delta}(\textbf{x}-\textbf{y}_{j}),\quad \textbf{y}_{j}\in Y,$$
which satisfies
\begin{align}
Ls_{u}(\textbf{x}) & = f(\textbf{x}),\quad \textbf{x}\in X^{I},
\label{ls1} \\[4pt]
s_{u}(\textbf{x}) & = g(\textbf{x}), \quad \textbf{x}\in X^{B}.
\label{ls2}
\end{align}
(\ref{ls1})-(\ref{ls2}) is a one-level unsymmetric collocation for the equations (\ref{31})-(\ref{32}),
but with a nonsquare linear systems. We now want to bound the $L^{2}-$error between the solution $u$
and its approximation $s_{u}$.

First we define a map $I_{h}: u\rightarrow I_{h}u\in W_{\delta}$ via interpolation
\begin{equation}\label{interpolation}
I_{h}u(\textbf{y}_{k})=\sum_{j=1}^{N}c_{j}\Phi_{\delta}(\textbf{y}_{k}-\textbf{y}_{j})=u(\textbf{y}_{k}),\quad \textbf{y}_{k}\in Y, k=1,2,\ldots,N,
\end{equation}
for the unknown coefficients $c_{1},c_{2},\ldots,c_{N}$ are determined by solving the above linear system.
Because $I_{h}u$ is a interpolant to $u$ from the trial $W_{\delta}$ at the centers $Y$, so its stability can be obtained
by using Lemma \ref{w18}.
\begin{lemma}\label{wending}
Let $\Omega\subseteq \mathbb{R}^{d}$ be a bounded domain with Lipschitz boundary. Let $Y$ be
quasi-uniform in the sense that $h_{Y}\leq cq_{Y}$. Denote the interpolant of $u\in W^{\sigma,2}(\Omega)$
with $\sigma> d/2+2$ on $Y$ by $I_{h}u$. Then there exists a constant $C>0$ such that
$$\|u-I_{h}u\|_{W^{\sigma,2}(\Omega)}\leq C\delta^{-\sigma}\|u\|_{W^{\sigma,2}(\Omega)}.$$
\end{lemma}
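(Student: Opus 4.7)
The plan is to reduce the desired $W^{\sigma,2}(\Omega)$ bound to a minimum-norm statement in the native space of the scaled kernel $\Phi_\delta$, and then to convert back to a Sobolev bound through the norm equivalence of Lemma \ref{mdj}. No cancellation of the $\delta^{-\sigma}$ factor is expected, because this factor is the ``scaling gap'' in that equivalence.

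First, I would extend $u$ via the Sobolev extension theorem (Lemma \ref{skuo}) to $Eu \in W^{\sigma,2}(\mathbb{R}^d)$ with $Eu|_\Omega = u$ and $\|Eu\|_{W^{\sigma,2}(\mathbb{R}^d)} \leq C_E \|u\|_{W^{\sigma,2}(\Omega)}$. Because $\Phi$ satisfies the algebraic decay (\ref{adc}), Lemma \ref{mdj} gives $\mathcal{N}_{\Phi_\delta}(\mathbb{R}^d) = W^{\sigma,2}(\mathbb{R}^d)$ as vector spaces, so $Eu$ lies in the native space; since $\sigma > d/2$, pointwise evaluation on $Y$ is continuous and $Eu|_Y = u|_Y$. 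The interpolation conditions (\ref{interpolation}), combined with the reproducing property $\langle f, \Phi_\delta(\cdot - \mathbf{y})\rangle_{\Phi_\delta} = f(\mathbf{y})$, say precisely that $I_h u$ is the $\langle\cdot,\cdot\rangle_{\Phi_\delta}$-orthogonal projection of $Eu$ onto the finite-dimensional subspace $W_\delta$. The Pythagorean identity then yields $\|Eu - I_h u\|_{\Phi_\delta} \leq \|Eu\|_{\Phi_\delta}$. Combining this with the two inequalities of Lemma \ref{mdj} and with the extension estimate gives
\begin{align*}
\|u - I_h u\|_{W^{\sigma,2}(\Omega)} & \leq \|Eu - I_h u\|_{W^{\sigma,2}(\mathbb{R}^d)} \leq c_2 \delta^{-\sigma} \|Eu - I_h u\|_{\Phi_\delta} \\
& \leq c_2 \delta^{-\sigma} \|Eu\|_{\Phi_\delta} \leq (c_2/c_1)\, \delta^{-\sigma} \|Eu\|_{W^{\sigma,2}(\mathbb{R}^d)} \\
& \leq C \delta^{-\sigma} \|u\|_{W^{\sigma,2}(\Omega)},
\end{align*}
which is the claimed estimate.

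The quasi-uniformity hypothesis $h_Y \leq c q_Y$ is not strictly needed for this particular stability bound, but is stated presumably because it is required to combine the present lemma with later tools such as the inverse inequality (Lemma \ref{invv}) and the band-limited companion function from Lemma \ref{w18}. The step that deserves the most care is the justification that the interpolation problem (\ref{interpolation}) is uniquely solvable and that $I_h u \in \mathcal{N}_{\Phi_\delta}(\mathbb{R}^d)$; both follow from the strict positive definiteness of $\Phi_\delta$ implied by (\ref{adc}). If the authors' intended route instead proceeds through Lemma \ref{w18}, an essentially equivalent path is to produce a band-limited $f_{k/q_Y}$ interpolating $Eu$ on $Y$ with the stated control on $\|f_{k/q_Y}\|$, and then to exploit the minimum-norm property $\|I_h u\|_{\Phi_\delta} \leq \|f_{k/q_Y}\|_{\Phi_\delta}$ before converting back through Lemma \ref{mdj}. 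In either case the $\delta^{-\sigma}$ dependence is unavoidable and originates solely from the scaling factor in the norm equivalence.
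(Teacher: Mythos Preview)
Your proof is correct and in fact more direct than the paper's. You use the orthogonal-projection (minimum-norm) property of the native-space interpolant applied to the extension $Eu$, combined with the norm equivalence of Lemma~\ref{mdj}; that is all that is really needed, and your observation that quasi-uniformity plays no role in this particular bound is accurate. The paper instead takes the detour you sketch in your final paragraph: it invokes Lemma~\ref{w18} to produce a band-limited companion $\widetilde u=(Eu)_{k/q_Y}$ with $\widetilde u|_Y=u|_Y$, splits $u-I_hu$ through $\widetilde u$, and bounds $\|Eu-\widetilde u\|$ and $\|\widetilde u-I_h\widetilde u\|$ separately via Lemma~\ref{mdj} and the minimum-norm property. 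Your route avoids the band-limited machinery entirely and gets to the same estimate with one fewer triangle inequality; the paper's route has the minor advantage of reusing auxiliary results (Lemma~\ref{w18}) that are already in place for the inverse inequality and the multilevel analysis, but for this lemma that indirection buys nothing.
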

\begin{proof}
From Lemma \ref{w18} combined with Lemma \ref{skuo}, for $u\in W^{\sigma,2}(\Omega)$ there exists a band-limited function
$\widetilde{u}:=(Eu)_{k/q_{Y}}\in \mathcal{B}_{k/q_{Y}}$ such that $u|Y=Eu|Y=\widetilde{u}|Y$. Then
\begin{align*}
\|u-I_{h}u\|_{W^{\sigma,2}(\Omega)}&=\|Eu-I_{h}(Eu)\|_{W^{\sigma,2}(\Omega)}\\[4pt]
&\leq\|Eu-\widetilde{u}\|_{W^{\sigma,2}(\Omega)}+\|\widetilde{u}-I_{h}u\|_{W^{\sigma,2}(\Omega)}\\[4pt]
&\leq\|Eu-\widetilde{u}\|_{W^{\sigma,2}(\Omega)}+\|\widetilde{u}-I_{h}\widetilde{u}\|_{W^{\sigma,2}(\Omega)}.
\end{align*}
The first term on the right can be bounded by
\begin{align*}
\|Eu-\widetilde{u}\|_{W^{\sigma,2}(\Omega)}&\leq\|Eu-\widetilde{u}\|_{W^{\sigma,2}(\mathbb{R}^{d})}\\[4pt]
&\leq C\delta^{-\sigma}\|Eu-\widetilde{u}\|_{\Phi_{\delta}}\leq 5C\delta^{-\sigma}\|Eu\|_{\Phi_{\delta}}\\[4pt]
&\leq5C\delta^{-\sigma}\|Eu\|_{W^{\sigma,2}(\mathbb{R}^{d})}\leq C\delta^{-\sigma}\|u\|_{W^{\sigma,2}(\Omega)}.
\end{align*}
Using Lemma \ref{skuo}, Lemma \ref{mdj} and Lemma \ref{w18}, the upper bound of the second term simply follows from
\begin{align*}
\|\widetilde{u}-I_{h}\widetilde{u}\|_{W^{\sigma,2}(\Omega)}&\leq
C\delta^{-\sigma}\|\widetilde{u}-I_{h}\widetilde{u}\|_{\Phi_{\delta}}\\[4pt]
&\leq C\delta^{-\sigma}\|\widetilde{u}\|_{\Phi_{\delta}}\leq C\delta^{-\sigma}\|\widetilde{u}\|_{W^{\sigma,2}(\mathbb{R}^{d})}\\[4pt]
&\leq C\delta^{-\sigma}\|Eu\|_{W^{\sigma,2}(\mathbb{R}^{d})}\leq C\delta^{-\sigma}\|u\|_{W^{\sigma,2}(\Omega)}.
\end{align*}
\end{proof}

On the other hand, $I_{h}u-s_{u}\in W^{\sigma,2}(\Omega)$ satisfies the equations (\ref{31})-(\ref{32})
with $f=L(I_{h}u-s_{u})$ and $g=I_{h}u-s_{u}$. Hence, with the help of the priori inequality (Lemma \ref{xianyan}) we have
\begin{equation}\label{guji}
\|I_{h}u-s_{u}\|_{L^{2}(\Omega)}\leq C\left\{\|L(I_{h}u-s_{u})\|_{L^{2}(\Omega)}+\|I_{h}u-s_{u}\|_{W^{1/2,2}(\partial\Omega)}\right\}.
\end{equation}
So we need to bound the two terms on the right-hand side separately.
Using the two testing operators $\Pi_{X^{I}}$ and $\Pi_{X^{B}}$ and combining with the sampling inequality, we can
bound $\|L(I_{h}u-s_{u})\|_{L^{2}(\Omega)}$ and $\|I_{h}u-s_{u}\|_{W^{1/2,2}(\partial\Omega)}$ respectively.

\begin{lemma}\label{coerciveness}(\textbf{coerciveness of} $\Pi_{X^{I}},\Pi_{X^{B}}$)
Let $\Omega\subseteq \mathbb{R}^{d}$ be a bounded domain has a $C^{k,s}-$boundary with $s\in[0,1)$. Let $Y$ be
quasi-uniform in the sense that $h_{Y}\leq cq_{Y}$. Let the testing data sites $X$ also be quasi-uniform.
Let $\sigma=k+s$, $k=\lfloor\sigma\rfloor> d/2+2$. Denote the interpolant of $u\in W^{\sigma,2}(\Omega)$
on $Y$ by $I_{h}u$. Let $s_{u}\in W_{\delta}$ be the collocation solution that satisfies
(\ref{ls1})-(\ref{ls2}). Let $s_{I}$ and $s_{B}$ be the fill distance of the data sites $X^{I}$ and $X^{B}$
respectively, and $s_{X}=\max\{s_{I},s_{B}\}$. If the testing discretizations is finer than the trial discretization and
satisfies condition
\begin{equation}\label{condition}
Cs_{X}^{\sigma-2}h_{Y}^{-\sigma}<\frac{1}{2},
\end{equation}
then there exists a constant $C>0$ such that
\begin{equation}\label{462}
\|I_{h}u-s_{u}\|^{2}_{L^{2}(\Omega)}\leq C
 \left\{\|\Pi_{X^{I}}L(I_{h}u-s_{u})\|^{2}_{\infty,X^{I}}
+s_{B}^{-1}\|\Pi_{X^{B}}(I_{h}u-s_{u})\|^{2}_{\infty,X^{B}}\right\}.
\end{equation}
\end{lemma}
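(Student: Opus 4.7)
The plan is to start from the a priori estimate (\ref{guji}) and bound the two right-hand side terms separately by the discrete values in (\ref{462}), then absorb the remainders using hypothesis (\ref{condition}). Note that $I_h u - s_u$ lies in $W_\delta \subseteq W^{\sigma,2}(\Omega)$, and its evaluations at $X^I$ under $L$ and at $X^B$ directly are precisely what appears on the right-hand side of (\ref{462}), so no other quantity needs to be tracked.

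For the interior term, I would apply the sampling inequality (Lemma \ref{wl3}) with $q=2$, $\mu=0$, $m=\sigma-2$ on $\Omega$ with data set $X^I$ to get
\[
\|L(I_h u - s_u)\|_{L^2(\Omega)} \le C\bigl(s_I^{\sigma-2}\|L(I_h u - s_u)\|_{W^{\sigma-2,2}(\Omega)} + \|\Pi_{X^I} L(I_h u - s_u)\|_{\infty,X^I}\bigr),
\]
then chain the continuity of $L$ (Lemma \ref{llianxu}) with the inverse inequality (Lemma \ref{invv}, applied to $I_h u - s_u \in W_\delta$ and transferred to $\Omega$) and quasi-uniformity $h_Y \le c q_Y$ to convert the first term into $C s_I^{\sigma-2} h_Y^{-\sigma}\|I_h u - s_u\|_{L^2(\Omega)}$.

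For the boundary term, I would invoke the sampling inequality on $\partial\Omega$ through the charts $\psi_j$, with $q=2$, $\mu=1/2$, $m=\sigma-1/2$ and data set $X^B$ of fill distance $s_B$; in dimension $d-1$ this yields exponent $\sigma-1$ on the smoothness term and $s_B^{-1/2}$ on the discrete term. The trace theorem (Lemma \ref{trace}) converts $\|I_h u - s_u\|_{W^{\sigma-1/2,2}(\partial\Omega)}$ into $\|I_h u - s_u\|_{W^{\sigma,2}(\Omega)}$, and then the same inverse inequality plus quasi-uniformity bound this by $C h_Y^{-\sigma}\|I_h u - s_u\|_{L^2(\Omega)}$, producing a boundary contribution $C s_B^{\sigma-1} h_Y^{-\sigma}\|I_h u - s_u\|_{L^2(\Omega)} + C s_B^{-1/2}\|\Pi_{X^B}(I_h u - s_u)\|_{\infty,X^B}$.

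Adding the two bounds via (\ref{guji}) and using $s_I, s_B \le s_X \le 1$ so that both smoothness coefficients are dominated by $C s_X^{\sigma-2} h_Y^{-\sigma}$, hypothesis (\ref{condition}) lets me absorb the $\|I_h u - s_u\|_{L^2(\Omega)}$ contribution on the left-hand side with a factor $\tfrac12$. Squaring and applying $(a+b)^2 \le 2(a^2+b^2)$ produces (\ref{462}). The main obstacle I anticipate is technical rather than conceptual: justifying the inverse inequality for elements of $W_\delta$ on $\Omega$ (Lemma \ref{invv} is phrased on $\mathbb{R}^d$ and for the kernel $\Psi_\delta$), which requires observing that $\Phi_\delta$ satisfies the same algebraic decay condition (\ref{adc}) so the native-space framework of Lemma \ref{mdj} transfers, and accommodating the restriction from $\mathbb{R}^d$ to $\Omega$ via the extension provided by Lemma \ref{skuo}.
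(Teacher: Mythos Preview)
Your proposal is correct and follows essentially the same route as the paper: start from the a~priori estimate (\ref{guji}), bound the interior term via the sampling inequality, the continuity of $L$, the extension operator, and the inverse inequality (yielding the factor $s_X^{\sigma-2}h_Y^{-\sigma}$), bound the boundary term chartwise via the sampling inequality in dimension $d-1$ together with the trace theorem and the same inverse-inequality chain, and then absorb the $\|I_hu-s_u\|_{L^2(\Omega)}$ contributions using condition (\ref{condition}). The only cosmetic difference is that the paper squares the boundary estimate first (because the $W^{1/2,2}(\partial\Omega)$ norm is defined as a sum of squares over charts) and then combines, whereas you combine and absorb before squaring; both orderings give (\ref{462}). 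The technical concern you flag about Lemma~\ref{invv} being stated for $\Psi_\delta$ on $\mathbb{R}^d$ is exactly the point the paper handles by inserting the extension operator from Lemma~\ref{skuo} and using the natural global definition of elements of $W_\delta$.
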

\begin{proof}
Using Lemma \ref{skuo}, Lemma \ref{llianxu}, Lemma \ref{invv}, and Lemma \ref{wl3}, we have
\begin{align*}
\|L(I_{h}u-s_{u})\|_{L^{2}(\Omega)}&\leq C\left\{s_{I}^{\sigma-2}\|L(I_{h}u-s_{u})\|_{W^{\sigma-2,2}(\Omega)}
+\|\Pi_{X^{I}}L(I_{h}u-s_{u})\|_{\infty,X^{I}}\right\}
\\[4pt]
&\leq Cs_{I}^{\sigma-2}\|I_{h}u-s_{u}\|_{W^{\sigma,2}(\Omega)}
+C\|\Pi_{X^{I}}L(I_{h}u-s_{u})\|_{\infty,X^{I}}
\\[4pt]
&\leq Cs_{I}^{\sigma-2}\|E(I_{h}u-s_{u})\|_{W^{\sigma,2}(\mathbb{R}^{d})}
+C\|\Pi_{X^{I}}L(I_{h}u-s_{u})\|_{\infty,X^{I}}
\\[4pt]
&\leq Cs_{I}^{\sigma-2}q_{Y}^{-\sigma}\|E(I_{h}u-s_{u})\|_{L^{2}(\mathbb{R}^{d})}
+C\|\Pi_{X^{I}}L(I_{h}u-s_{u})\|_{\infty,X^{I}}
\\[4pt]
&\leq Cs_{I}^{\sigma-2}q_{Y}^{-\sigma}\|I_{h}u-s_{u}\|_{L^{2}(\Omega)}
+C\|\Pi_{X^{I}}L(I_{h}u-s_{u})\|_{\infty,X^{I}}
\\[4pt]
&\leq Cs_{X}^{\sigma-2}h_{Y}^{-\sigma}\|I_{h}u-s_{u}\|_{L^{2}(\Omega)}
+C\|\Pi_{X^{I}}L(I_{h}u-s_{u})\|_{\infty,X^{I}}.
\end{align*}
For the boundary term, we use the definition of the Sobolev norm on the boundary, Lemma \ref{skuo},
Lemma \ref{trace}, and Lemma \ref{wl3}. If we denote
$w_{j}=((I_{h}u-s_{u})\omega_{j})\circ\Psi_{j}\in W^{\sigma-1/2,2}(B)$, then
\begin{align*}
&\quad\|I_{h}u-s_{u}\|^{2}_{W^{1/2,2}(\partial\Omega)}=\sum_{j=1}^{K}\|w_{j}\|^{2}_{W^{1/2,2}(B)}
\\[4pt]
&\leq C\sum_{j=1}^{K}\left(s_{Bj}^{\sigma-1}\|w_{j}\|_{W^{\sigma-1/2,2}(B)}+
s_{Bj}^{-1/2}\|\Pi_{\psi_{j}^{-1}\left(X^{B}\bigcap V_{j}\right)} w_{j}\|_{\infty,\psi_{j}^{-1}\left(X^{B}\bigcap V_{j}\right)}
\right)^{2}
\\[4pt]
&\leq C\sum_{j=1}^{K}\left(s_{Bj}^{2\sigma-2}\|w_{j}\|^{2}_{W^{\sigma-1/2,2}(B)}+
s_{Bj}^{-1}\|\Pi_{\psi_{j}^{-1}\left(X^{B}\bigcap V_{j}\right)} w_{j}\|^{2}_{\infty,\psi_{j}^{-1}\left(X^{B}\bigcap V_{j}\right)}
\right)
\\[4pt]
&\leq Cs_{B}^{2\sigma-2}\|I_{h}u-s_{u}\|^{2}_{W^{\sigma-1/2,2}(\partial\Omega)}+Cs_{B}^{-1}
\|\Pi_{X^{B}} (I_{h}u-s_{u})\|^{2}_{\infty,X^{B}}
\\[4pt]
&\leq Cs_{B}^{2\sigma-2}\|I_{h}u-s_{u}\|^{2}_{W^{\sigma,2}(\Omega)}+C s_{B}^{-1}
\|\Pi_{X^{B}} (I_{h}u-s_{u})\|^{2}_{\infty,X^{B}}
\\[4pt]
&\leq Cs_{X}^{2\sigma-2}h_{Y}^{-2\sigma}\|I_{h}u-s_{u}\|^{2}_{L^{2}(\Omega)}+C s_{B}^{-1}
\|\Pi_{X^{B}} (I_{h}u-s_{u})\|^{2}_{\infty,X^{B}},
\end{align*}
where the last inequality is obtained by the same as the last four steps for the term $\|L(I_{h}u-s_{u})\|_{L^{2}(\Omega)}$.

From the inequality (\ref{guji}) and the condition (\ref{condition}), we have
\begin{align*}
\|I_{h}u-s_{u}\|^{2}_{L^{2}(\Omega)}&\leq
C\|L(I_{h}u-s_{u})\|^{2}_{L^{2}(\Omega)}+C\|I_{h}u-s_{u}\|^{2}_{W^{1/2,2}(\partial\Omega)}
\\[4pt]
&\leq \frac{1}{2}\|I_{h}u-s_{u}\|^{2}_{L^{2}(\Omega)}+C\|\Pi_{X^{I}}L(I_{h}u-s_{u})\|^{2}_{\infty,X^{I}}
+Cs_{B}^{-1}
\|\Pi_{X^{B}} (I_{h}u-s_{u})\|^{2}_{\infty,X^{B}}.
\end{align*}
This yields the result (\ref{462}).
\end{proof}

If we assume the linear systems (\ref{ls1})-(\ref{ls2}) can be solved by $s_{u}\in W_{\delta}$ with the tolerance
\begin{equation}\label{tolerance}
\|\Pi_{X^{I}}L(u-s_{u})\|^{2}_{\infty,X^{I}}+s_{B}^{-1}\|\Pi_{X^{B}}(u-s_{u})\|^{2}_{\infty,X^{B}}
\leq C\delta^{-2\sigma}h_{Y}^{2(\sigma-2-d/2)}\|u\|^{2}_{W^{\sigma,2}(\Omega)},
\end{equation}
then we can obtain the boundedness of $\Pi_{X^{I}},\Pi_{X^{B}}$.
\begin{lemma}\label{boundedness}(\textbf{boundedness of} $\Pi_{X^{I}},\Pi_{X^{B}}$)
Under the same assumption of Lemma \ref{coerciveness}. Suppose one of the linear solvers
for the equations (\ref{ls1})-(\ref{ls2}) satisfies (\ref{tolerance}).
Then there exists a constant $C>0$ such that
\begin{equation}
\|\Pi_{X^{I}}L(I_{h}u-s_{u})\|^{2}_{\infty,X^{I}}
+s_{B}^{-1}\|\Pi_{X^{B}}(I_{h}u-s_{u})\|^{2}_{\infty,X^{B}}\leq
C\delta^{-2\sigma}h^{2(\sigma-2-d/2)}_{Y}\|u\|^{2}_{W^{\sigma,2}(\Omega)}.
\end{equation}
\end{lemma}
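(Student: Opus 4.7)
The plan is to insert the interpolant $u$ inside the two testing functionals and reduce to a known estimate. Writing $I_h u - s_u = (u - s_u) - (u - I_h u)$ and applying $(a+b)^2 \leq 2a^2 + 2b^2$ separately inside $\|\Pi_{X^I}L(\cdot)\|_{\infty,X^I}^2$ and inside $\|\Pi_{X^B}(\cdot)\|_{\infty,X^B}^2$, the halves involving $u - s_u$ are already controlled by the tolerance assumption (\ref{tolerance}); what remains is to prove that the interpolation residuals
$$\|\Pi_{X^I}L(u-I_hu)\|^2_{\infty,X^I} + s_B^{-1}\|\Pi_{X^B}(u-I_hu)\|^2_{\infty,X^B}$$
satisfy the same upper bound $C\delta^{-2\sigma}h_Y^{2(\sigma-2-d/2)}\|u\|_{W^{\sigma,2}(\Omega)}^2$. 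The decisive feature is that $u - I_h u$ vanishes identically on $Y$ by (\ref{interpolation}), which will kill the discrete term in every sampling inequality we invoke.

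For the interior piece I would first pass to a continuous norm via
$$\|\Pi_{X^I}L(u-I_hu)\|_{\infty,X^I} \leq \|L(u-I_hu)\|_{L^\infty(\Omega)} \leq C\|u-I_hu\|_{W^{2,\infty}(\Omega)},$$
using the boundedness of the coefficients $a_{ij},b_i,c$ of $L$. Then I would apply Lemma~\ref{wl3} with $m=\sigma$, $\mu=2$, $q=\infty$ on the trial set $Y$; the admissibility condition $\mu+d/2<\lfloor m\rfloor$ is precisely the standing hypothesis $\lfloor\sigma\rfloor>d/2+2$ inherited from Lemma~\ref{coerciveness}. Because $\Pi_Y(u-I_hu)=0$, only the first term of the sampling inequality survives, giving $\|u-I_hu\|_{W^{2,\infty}(\Omega)} \leq C h_Y^{\sigma-2-d/2}\|u-I_hu\|_{W^{\sigma,2}(\Omega)}$, and then the stability estimate of Lemma~\ref{wending} supplies the factor $\delta^{-\sigma}\|u\|_{W^{\sigma,2}(\Omega)}$. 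This reproduces exactly the target rate $\delta^{-\sigma}h_Y^{\sigma-2-d/2}\|u\|_{W^{\sigma,2}(\Omega)}$.

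The boundary piece follows the same template with $\mu=0$: after $\|\Pi_{X^B}(u-I_hu)\|_{\infty,X^B}\leq \|u-I_hu\|_{L^\infty(\Omega)}$, the sampling inequality (again with the discrete term annihilated) combined with Lemma~\ref{wending} delivers the bound $C h_Y^{2(\sigma-d/2)}\delta^{-2\sigma}\|u\|^2_{W^{\sigma,2}(\Omega)}$. The main obstacle is that this is heavier than the target by the factor $h_Y^4/s_B$, so one has to absorb the weight $s_B^{-1}$ using the quasi-uniformity of the testing sites and the fineness condition (\ref{condition}) on $s_X$, which keep $s_B$ from shrinking faster than a controllable power of $h_Y$. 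Once this bookkeeping is carried out, collecting the interior and boundary pieces and the two solver residuals finishes the proof; the careful choice $\mu=2$ for the interior and $\mu=0$ for the boundary is exactly what aligns the exponents in $h_Y$ with the right-hand side of (\ref{tolerance}).
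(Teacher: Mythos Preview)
Your proposal matches the paper's proof in structure: the same splitting $I_hu-s_u=(u-s_u)-(u-I_hu)$, the tolerance assumption~(\ref{tolerance}) for the first piece, and the combination of the sampling inequality (with the discrete term annihilated by $(u-I_hu)|_Y=0$) together with Lemma~\ref{wending} for the second. The interior estimate is identical in substance to the paper's.

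The one genuine difference is the boundary term. The paper pulls the boundary supremum back through the charts $\psi_j$ and applies the sampling inequality on the $(d{-}1)$-dimensional ball $B$ with the boundary centers $Y^B$, obtaining the exponent $h_B^{2(\sigma-1/2-d/2)}$ before invoking the trace theorem and Lemma~\ref{wending}. You instead bound $\|\cdot\|_{L^\infty(\partial\Omega)}\leq\|\cdot\|_{L^\infty(\Omega)}$ and sample directly in $\Omega$ with $\mu=0$, which is more elementary (no boundary charts, no trace theorem) and yields the slightly larger exponent $h_Y^{2(\sigma-d/2)}$; this extra power of $h_Y$ only makes the subsequent absorption of $s_B^{-1}$ easier. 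Both routes finish by the same bookkeeping step, replacing $s_B^{-1}$ by a power of $h_Y$ via condition~(\ref{condition}) and quasi-uniformity. One caution: condition~(\ref{condition}) is an \emph{upper} bound on $s_X$, so the replacement $s_B^{-1}\leq Ch_Y^{-\sigma/(\sigma-2)}$ tacitly assumes $s_B$ is also comparable to $h_Y^{\sigma/(\sigma-2)}$ from below---but this is exactly the implicit assumption the paper itself makes at the corresponding step, so your plan is faithful to the original argument.
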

\begin{proof}
From Lemma \ref{wl3}, Lemma \ref{llianxu}, Lemma \ref{wending}, we can bound
\begin{align*}
\|\Pi_{X^{I}}L(I_{h}u-s_{u})\|^{2}_{\infty,X^{I}}&\leq
2\|\Pi_{X^{I}}L(I_{h}u-u)\|^{2}_{\infty,X^{I}}+2\|\Pi_{X^{I}}L(u-s_{u})\|^{2}_{\infty,X^{I}}
\\[4pt]
&=
2\|L(I_{h}u-u)\|^{2}_{\infty,X^{I}}+2\|\Pi_{X^{I}}L(u-s_{u})\|^{2}_{\infty,X^{I}}
\\[4pt]
&\leq
2\|L(I_{h}u-u)\|^{2}_{\infty,\Omega}+2\|\Pi_{X^{I}}L(u-s_{u})\|^{2}_{\infty,X^{I}}
\\[4pt]
&\leq
Ch_{I}^{2(\sigma-2-d/2)}\|L(I_{h}u-u)\|^{2}_{W^{\sigma-2,2}(\Omega)}+2\|\Pi_{X^{I}}L(u-s_{u})\|^{2}_{\infty,X^{I}}
\\[4pt]
&\leq
Ch_{I}^{2(\sigma-2-d/2)}\|I_{h}u-u\|^{2}_{W^{\sigma,2}(\Omega)}+2\|\Pi_{X^{I}}L(u-s_{u})\|^{2}_{\infty,X^{I}}
\\[4pt]
&\leq
C\delta^{-2\sigma}h_{I}^{2(\sigma-2-d/2)}\|u\|^{2}_{W^{\sigma,2}(\Omega)}+2\|\Pi_{X^{I}}L(u-s_{u})\|^{2}_{\infty,X^{I}}.
\end{align*}
To bounding the boundary term,
we denote $w_{j}=((I_{h}u-u)\omega_{j})\circ\Psi_{j}\in W^{\sigma-1/2,2}(B)$. Then
we can use the sampling inequality on the unit ball $B=B(\textbf{0},1)\subseteq \mathbb{R}^{d-1}$.
Using the sampling inequality, the definition of the Sobolev norm on the boundary, the trace theorem, and Lemma \ref{wending},
then we have
\begin{align*}
\|\Pi_{X^{B}}(I_{h}u-s_{u})\|^{2}_{\infty,X^{B}}&\leq
2\|\Pi_{X^{B}}(I_{h}u-u)\|^{2}_{\infty,X^{B}}+2\|\Pi_{X^{B}}(u-s_{u})\|^{2}_{\infty,X^{B}}
\\[4pt]
&=
2\|I_{h}u-u\|^{2}_{\infty,X^{B}}+2\|\Pi_{X^{B}}(u-s_{u})\|^{2}_{\infty,X^{B}}
\\[4pt]
&\leq
2\|I_{h}u-u\|^{2}_{\infty,\partial\Omega}+2\|\Pi_{X^{B}}(u-s_{u})\|^{2}_{\infty,X^{B}}
\\[4pt]
&\leq
C\max_{1\leq j\leq K}\|w_{j}\|^{2}_{L^{\infty}(B)}+2\|\Pi_{X^{B}}(u-s_{u})\|^{2}_{\infty,X^{B}}
\\[4pt]
&\leq
C\max_{1\leq j\leq K}h_{Bj}^{2(\sigma-1/2-d/2)}\|w_{j}\|^{2}_{W^{\sigma-1/2,2}(B)}+2\|\Pi_{X^{B}}(u-s_{u})\|^{2}_{\infty,X^{B}}
\\[4pt]
&\leq
C h_{B}^{2(\sigma-1/2-d/2)}\sum_{j=1}^{K}\|w_{j}\|^{2}_{W^{\sigma-1/2,2}(B)}+2\|\Pi_{X^{B}}(u-s_{u})\|^{2}_{\infty,X^{B}}
\\[4pt]
&=
C h_{B}^{2(\sigma-1/2-d/2)}\|I_{h}u-u\|^{2}_{W^{\sigma-1/2,2}(\partial\Omega)}+2\|\Pi_{X^{B}}(u-s_{u})\|^{2}_{\infty,X^{B}}
\\[4pt]
&\leq
C h_{B}^{2(\sigma-1/2-d/2)}\|I_{h}u-u\|^{2}_{W^{\sigma,2}(\Omega)}+2\|\Pi_{X^{B}}(u-s_{u})\|^{2}_{\infty,X^{B}}
\\[4pt]
&\leq
C \delta^{-2\sigma}h_{B}^{2(\sigma-1/2-d/2)}\|u\|^{2}_{W^{\sigma,2}(\Omega)}+2\|\Pi_{X^{B}}(u-s_{u})\|^{2}_{\infty,X^{B}}.
\end{align*}
We now multiply the boundary inequality by a factor $s_{B}^{-1}$ and combine it with the domain inequality.
By using the condition $\lfloor\sigma\rfloor> d/2+2$, the condition (\ref{condition}),
the quasi-uniform property of $X$, the quasi-uniform property of $Y$, and the condition (\ref{tolerance}), we have
\begin{align*}
&\quad\|\Pi_{X^{I}}L(I_{h}u-s_{u})\|^{2}_{\infty,X^{I}}+s_{B}^{-1}\|\Pi_{X^{B}}(I_{h}u-s_{u})\|^{2}_{\infty,X^{B}}
\\[4pt]
&\leq
C\delta^{-2\sigma}\left(h_{I}^{2(\sigma-2-d/2)}+s_{B}^{-1}h_{B}^{2(\sigma-1/2-d/2)}\right)\|u\|^{2}_{W^{\sigma,2}(\Omega)}
\\[4pt]
&\leq
C\delta^{-2\sigma}\left(h_{Y}^{2(\sigma-2-d/2)}+h_{Y}^{-\sigma/(\sigma-2)}h_{Y}^{2(\sigma-1/2-d/2)}\right)\|u\|^{2}_{W^{\sigma,2}(\Omega)}
\\[4pt]
&\leq
C\delta^{-2\sigma}h^{2(\sigma-2-d/2)}_{Y}\|u\|^{2}_{W^{\sigma,2}(\Omega)}.
\end{align*}
\end{proof}

The next theorem is our main convergence result of
the one-level unsymmetric collocation for the equations (\ref{31})-(\ref{32}).
\begin{theorem}\label{onelevel}(\textbf{Convergence of one-level unsymmetric collocation})
Let $\Omega\subseteq \mathbb{R}^{d}$ be a bounded domain has a $C^{k,s}-$boundary with $s\in[0,1)$.
Let the trial data $Y$ with the fill distance $h_{Y}=\max\{h_{I},h_{B}\}$
and the testing data $X$ with the
fill distance $s_{X}=\max\{s_{I},s_{B}\}$ both be quasi-uniform.
Let $\sigma=k+s$, $k=\lfloor\sigma\rfloor> d/2+2$. Let $u\in W^{\sigma,2}(\Omega)$ be the solution of
(\ref{31})-(\ref{32}). Suppose $s_{u}\in W_{\delta}$ is the
collocation solution that satisfies (\ref{ls1})-(\ref{ls2}).
Assume that (\ref{ls1})-(\ref{ls2}) can be solved by $s_{u}$ with the tolerance (\ref{tolerance}).
If the testing discretizations is finer than the trial discretization and
satisfies condition (\ref{condition}),
then there exists a constant $C>0$ such that
\begin{equation*}\label{46}
\|u-s_{u}\|_{L^{2}(\Omega)}\leq
C\delta^{-\sigma}h^{\sigma-2-d/2}_{Y}\|u\|_{W^{\sigma,2}(\Omega)}.
\end{equation*}
\end{theorem}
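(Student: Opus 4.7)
The plan is to split the total error by the triangle inequality
\begin{equation*}
\|u-s_{u}\|_{L^{2}(\Omega)}\leq \|u-I_{h}u\|_{L^{2}(\Omega)} + \|I_{h}u-s_{u}\|_{L^{2}(\Omega)},
\end{equation*}
and then invoke the three preparatory results just proved, one for each piece. No new analytic inequality is needed beyond what is already in place: the theorem is essentially an assembly of Lemma \ref{wending}, Lemma \ref{coerciveness} and Lemma \ref{boundedness}.

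For the first (interpolation) term, the key observation is that $u-I_{h}u$ vanishes identically on the center set $Y$ by the definition (\ref{interpolation}) of $I_{h}u$. Applying the sampling inequality (Lemma \ref{wl3}) with $\mu=0$, $q=2$, $m=\sigma$ therefore collapses the discrete contribution $\|\Pi_{Y}(u-I_{h}u)\|_{\infty,Y}$ to zero, leaving $\|u-I_{h}u\|_{L^{2}(\Omega)}\leq C h_{Y}^{\sigma}\|u-I_{h}u\|_{W^{\sigma,2}(\Omega)}$. The stability estimate of Lemma \ref{wending} then controls the right-hand side by $C\delta^{-\sigma}\|u\|_{W^{\sigma,2}(\Omega)}$, giving a bound of order $\delta^{-\sigma}h_{Y}^{\sigma}$ for this term.

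For the second (discretization) term I would chain the two Section~4 lemmas back to back: Lemma \ref{coerciveness} controls $\|I_{h}u-s_{u}\|_{L^{2}(\Omega)}^{2}$ in terms of the discrete residual sum appearing on the right of (\ref{462}), and Lemma \ref{boundedness} bounds that sum by $C\delta^{-2\sigma}h_{Y}^{2(\sigma-2-d/2)}\|u\|_{W^{\sigma,2}(\Omega)}^{2}$. Taking a square root yields $\|I_{h}u-s_{u}\|_{L^{2}(\Omega)}\leq C\delta^{-\sigma}h_{Y}^{\sigma-2-d/2}\|u\|_{W^{\sigma,2}(\Omega)}$.

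To conclude, I would add the two bounds. Since $\sigma>d/2+2$ implies $\sigma>\sigma-2-d/2$, and the fill distance satisfies $h_{Y}\leq h_{0}\leq 1$, the collocation contribution $\delta^{-\sigma}h_{Y}^{\sigma-2-d/2}$ absorbs the interpolation contribution $\delta^{-\sigma}h_{Y}^{\sigma}$, and the claimed estimate follows. There is no truly hard step here; the delicate work was done in the three supporting lemmas, and the main task in writing the proof is to keep careful track of which power of $h_{Y}$ dominates once the two pieces are combined.
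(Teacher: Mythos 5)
Your proposal is correct and follows essentially the same route as the paper: the same triangle-inequality split at $I_{h}u$, the sampling inequality (Lemma \ref{wl3}) plus Lemma \ref{wending} for the interpolation term, Lemma \ref{coerciveness} chained with Lemma \ref{boundedness} for the collocation term, and absorption of the $h_{Y}^{\sigma}$ contribution into $h_{Y}^{\sigma-2-d/2}$. Your explicit justification of why the latter exponent dominates is a point the paper leaves implicit, but the argument is the same.
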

\begin{proof}
Using Lemma \ref{wl3} and combining Lemma \ref{coerciveness} with Lemma \ref{boundedness}, we have
\begin{align*}
\|u-s_{u}\|_{L^{2}(\Omega)}&\leq \|u-I_{h}u\|_{L^{2}(\Omega)}+\|I_{h}u-s_{u}\|_{L^{2}(\Omega)}
\\[4pt]
&\leq Ch_{Y}^{\sigma}\|u-I_{h}u\|_{W^{\sigma,2}(\Omega)}+
C\delta^{-\sigma}h^{\sigma-2-d/2}_{Y}\|u\|_{W^{\sigma,2}(\Omega)}
\\[4pt]
&\leq C\delta^{-\sigma}h_{Y}^{\sigma}\|u\|_{W^{\sigma,2}(\Omega)}+
C\delta^{-\sigma}h^{\sigma-2-d/2}_{Y}\|u\|_{W^{\sigma,2}(\Omega)}
\\[4pt]
&\leq C\delta^{-\sigma}h^{\sigma-2-d/2}_{Y}\|u\|_{W^{\sigma,2}(\Omega)}.
\end{align*}
\end{proof}
\section{Multilevel unsymmetric collocation}
We now consider implementing the unsymmetric collocation for
(\ref{31})-(\ref{32}) on the increasingly dense data sites.
Assume that we are given a sequence of interior centers $Y^{I}_{1},Y^{I}_{2},\ldots\subseteq \Omega$
and boundary centers $Y^{B}_{1},Y^{B}_{2},\ldots\subseteq \partial\Omega$. Corresponding, we are also
given a sequence of interior testing data sites $X^{I}_{1},X^{I}_{2},\ldots\subseteq \Omega$
and boundary testing data sites $X^{B}_{1},X^{B}_{2},\ldots\subseteq \partial\Omega$.
We denote
$$Y_{j}=Y_{j}^{I}\bigcup Y_{j}^{B},\quad X_{j}=X_{j}^{I}\bigcup X_{j}^{B}.$$
We denote the fill distance of $Y_{j}^{I}$ and $X_{j}^{I}$ by $\tilde{h}_{j},\tilde{s}_{j}$, and the
fill distance of $Y_{j}^{B}$ and $X_{j}^{B}$ by $\tilde{\tilde{h}}_{j},\tilde{\tilde{s}}_{j}$ respectively.
Let $h_{j}=\max\{\tilde{h}_{j},\tilde{\tilde{h}}_{j}\},s_{j}=\max\{\tilde{s}_{j},\tilde{\tilde{s}}_{j}\}$
be the fill distance of $Y_{j}$ and $X_{j}$ respectively, which satisfy
\begin{equation}\label{condition2}
Cs_{j}^{\sigma-2}h_{j}^{-\sigma}<\frac{1}{2}.
\end{equation}
Then, with a radial function $\Phi$ satisfied (\ref{adc}) and a series of decreasing scaling parameters
$\delta_{1}>\delta_{2}>\ldots>\delta_{j}>\ldots\in(0,1]$, we can define
\begin{equation}\label{rbf2}
\Phi_{j}:=\Phi_{\delta_{j}}=\delta_{j}^{-d}\Phi((\textbf{x}-\textbf{y})/\delta_{j}), \quad \textbf{x},\textbf{y}\in\mathbb{R}^{d}.
\end{equation}
Using the radial functions $\Phi_{j}$ and the trial centers $Y_{j}$, the one-level kernel-based trial spaces can be constructed as
\begin{equation}
\label{tss}
W_{j}=\textrm{span}\{\Phi_{j}(\cdot-\textbf{y})|\textbf{y}\in Y_{j}\}.
\end{equation}
Let $v_{j}\in W_{j}$ be some trial functions which have the forms
\begin{equation*}
v_{j}(\textbf{x})=\sum_{j=1}^{\sharp(Y_{j})}c_{j}\Phi_{j}(\textbf{x}-\textbf{y}_{j}),\quad \textbf{y}_{j}\in Y_{j},
\end{equation*}
then a multilevel unsymmetric collocation algorithm for the equations (\ref{31})-(\ref{32}) can be obtained as the following form.

\begin{algorithm}\label{algo}(\textbf{multilevel unsymmetric collocation})

Given the right-hand sides $f$ and $g$, we implement

(1) Set $u_{0}=0,f_{0}=f,g_{0}=g$

(2) for $j=1,2,\ldots$ do

\quad\quad $\bullet$ Determine the local correction $v_{j}$ to $f_{j-1}$ and $g_{j-1}$ with
\begin{align}
Lv_{j}(\textbf{x})&=f_{j-1}(\textbf{x}),\quad \textbf{x}\in X_{j}^{I},
\label{loc1}
\\[4pt]
v_{j}(\textbf{x})&=g_{j-1}(\textbf{x}),\quad \textbf{x}\in X_{j}^{B},
\label{loc2}
\end{align}

\quad\quad $\bullet$ Update the global approximation and the residuals
\begin{align*}
u_{j}&=u_{j-1}+v_{j},
\\[4pt]
f_{j}&=f_{j-1}-Lv_{j},
\\[4pt]
g_{j}&=g_{j-1}-v_{j}.
\end{align*}
\end{algorithm}
Suppose $u\in W^{\sigma,2}(\Omega)$ is the exact solution of the equations (\ref{31})-(\ref{32}),
we denote $e_{j}=u-u_{j}$ be the error at level $j$. Then Algorithm \ref{algo} satisfies the following relations:
\begin{equation*}
u_{j}=\sum_{k=1}^{j}v_{k},\quad e_{j}=u-u_{j}=e_{j-1}-v_{j},\quad f_{j}=f-Lu_{j},\quad \textrm{and}\quad g_{j}=g-u_{j}.
\end{equation*}
\begin{lemma}\label{51}
Let $\Omega\subseteq \mathbb{R}^{d}$ be a bounded domain has a $C^{k,s}-$boundary with $s\in[0,1)$.
Let the trial data $Y_{j}$ with the fill distance $h_{j}$
and the testing data $X_{j}$ with the
fill distance $s_{j}$ both be quasi-uniform for $j=1,2,\ldots$.
Let $\sigma=k+s$, $k=\lfloor\sigma\rfloor> d/2+2$.
Denote the interpolant of $e_{j-1}\in W^{\sigma,2}(\Omega)$
on $Y_{j}$ by $I_{j}: e_{j-1}\rightarrow I_{j}e_{j-1}\in W_{j}$.
Suppose $v_{j}\in W_{j}$ are the
local correction decided by Algorithm \ref{algo}.
Assume that (\ref{loc1})-(\ref{loc2}) can be solved by $v_{j}$ with the tolerance
\begin{equation}\label{tolerance2}
\|\Pi_{X^{I}_{j}}Le_{j}\|^{2}_{\infty,X^{I}_{j}}+\tilde{\tilde{s}}_{j}^{-1}\|\Pi_{X^{B}_{j}}e_{j}\|^{2}_{\infty,X^{B}_{j}}
\leq C\delta_{j}^{-2\sigma}h_{j}^{2(\sigma-2-d/2)}\|e_{j-1}\|^{2}_{W^{\sigma,2}(\Omega)},
\end{equation}
for $j=1,2,\ldots$.
If the testing discretizations is finer than the trial discretization and
satisfies condition (\ref{condition2}),
then there exists a constant $C>0$ such that
\begin{equation}\label{yichuan1}
\|e_{j}\|_{W^{\sigma,2}(\Omega)}\leq
C\delta_{j}^{-\sigma}h_{j}^{-(2+d/2)}\|e_{j-1}\|_{W^{\sigma,2}(\Omega)}.
\end{equation}
\end{lemma}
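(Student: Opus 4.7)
The plan is to model the argument on the proof of Theorem \ref{onelevel}, but tracking the $W^{\sigma,2}(\Omega)$ norm of the level-$j$ error in terms of the previous-level error. The crucial observation is that Algorithm \ref{algo} makes $v_j$ the unsymmetric collocation approximant to the elliptic residual problem whose exact solution is $e_{j-1}$, with right-hand sides $f_{j-1}=Le_{j-1}$ on $\Omega$ and $g_{j-1}=e_{j-1}$ on $\partial\Omega$; in particular $e_j=e_{j-1}-v_j$. Inductively $e_{j-1}\in W^{\sigma,2}(\Omega)$, since $u\in W^{\sigma,2}(\Omega)$ and each correction $v_k\in W_k$ lies in the native space, which coincides with $W^{\sigma,2}(\mathbb{R}^d)$ by Lemma \ref{mdj}.

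First I would introduce the interpolant $I_je_{j-1}\in W_j$ and split
\begin{equation*}
\|e_j\|_{W^{\sigma,2}(\Omega)}\leq\|e_{j-1}-I_je_{j-1}\|_{W^{\sigma,2}(\Omega)}+\|I_je_{j-1}-v_j\|_{W^{\sigma,2}(\Omega)}.
\end{equation*}
The first term is handled immediately by the interpolation stability Lemma \ref{wending}: it is at most $C\delta_j^{-\sigma}\|e_{j-1}\|_{W^{\sigma,2}(\Omega)}$.

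The second term is the core of the proof. Since $I_je_{j-1}-v_j\in W_j$, after the Sobolev extension of Lemma \ref{skuo} the inverse inequality (Lemma \ref{invv}) applied with $\beta=\sigma$, combined with the quasi-uniformity $h_j\leq cq_j$, gives
\begin{equation*}
\|I_je_{j-1}-v_j\|_{W^{\sigma,2}(\Omega)}\leq Ch_j^{-\sigma}\|I_je_{j-1}-v_j\|_{L^2(\Omega)}.
\end{equation*}
For the remaining $L^2$ factor I would invoke Lemmas \ref{coerciveness} and \ref{boundedness} essentially verbatim, under the substitutions $(u,s_u,I_hu,f,g)\mapsto(e_{j-1},v_j,I_je_{j-1},f_{j-1},g_{j-1})$ and with the tolerance (\ref{tolerance}) replaced by (\ref{tolerance2}). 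This yields
\begin{equation*}
\|I_je_{j-1}-v_j\|_{L^2(\Omega)}\leq C\delta_j^{-\sigma}h_j^{\sigma-2-d/2}\|e_{j-1}\|_{W^{\sigma,2}(\Omega)},
\end{equation*}
so the second term is controlled by $C\delta_j^{-\sigma}h_j^{-(2+d/2)}\|e_{j-1}\|_{W^{\sigma,2}(\Omega)}$, which dominates the interpolation bound and delivers (\ref{yichuan1}).

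The main obstacle is not any single inequality but the careful transfer of Lemmas \ref{coerciveness} and \ref{boundedness} from the original problem to the residual problem, verifying that (\ref{tolerance2}) fits into the boundedness proof with $e_{j-1}$ in place of $u$: the algebra of the boundary term, where the factor $\tilde{\tilde{s}}_j^{-1}$ appears and must be absorbed using the quasi-uniformity of $X_j$ together with condition (\ref{condition2}), needs to be checked in detail. Once that bookkeeping is done, the combination with the inverse inequality and the interpolation estimate is routine.
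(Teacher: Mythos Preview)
Your proposal is correct and follows essentially the same route as the paper: the triangle-inequality split via $I_je_{j-1}$, Lemma~\ref{wending} for the interpolation term, Sobolev extension plus the inverse inequality (Lemma~\ref{invv}) to pass from $W^{\sigma,2}$ to $L^2$ on the trial-space difference, and then Lemmas~\ref{coerciveness}--\ref{boundedness} applied to the residual problem with the tolerance (\ref{tolerance2}). The paper's proof is exactly this chain, written out as a single string of inequalities, so your identification of the substitution $(u,s_u,I_hu)\mapsto(e_{j-1},v_j,I_je_{j-1})$ and of the $\tilde{\tilde{s}}_j^{-1}$ bookkeeping as the only delicate point is accurate.
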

\begin{proof}
From Lemma \ref{wending}, Lemma \ref{skuo}, Lemma \ref{invv},
Lemma \ref{coerciveness}, and Lemma \ref{boundedness}, we obtain
\begin{align*}
\|e_{j}\|_{W^{\sigma,2}(\Omega)}&=\|e_{j-1}-v_{j}\|_{W^{\sigma,2}(\Omega)}
\\[4pt]
&\leq\|e_{j-1}-I_{j}e_{j-1}\|_{W^{\sigma,2}(\Omega)}+\|I_{j}e_{j-1}-v_{j}\|_{W^{\sigma,2}(\Omega)}
\\[4pt]
&\leq C\delta_{j}^{-\sigma}\|e_{j-1}\|_{W^{\sigma,2}(\Omega)}+\|E(I_{j}e_{j-1}-v_{j})\|_{W^{\sigma,2}(\mathbb{R}^{d})}
\\[4pt]
&\leq C\delta_{j}^{-\sigma}\|e_{j-1}\|_{W^{\sigma,2}(\Omega)}+Ch_{j}^{-\sigma}\|E(I_{j}e_{j-1}-v_{j})\|_{L^{2}(\mathbb{R}^{d})}
\\[4pt]
&\leq C\delta_{j}^{-\sigma}\|e_{j-1}\|_{W^{\sigma,2}(\Omega)}+Ch_{j}^{-\sigma}\|I_{j}e_{j-1}-v_{j}\|_{L^{2}(\Omega)}
\\[4pt]
&\leq C\delta_{j}^{-\sigma}\|e_{j-1}\|_{W^{\sigma,2}(\Omega)}+C\delta_{j}^{-\sigma}h_{j}^{-\sigma}
h_{j}^{\sigma-2-d/2}\|e_{j-1}\|_{W^{\sigma,2}(\Omega)}
\\[4pt]
&\leq C\delta_{j}^{-\sigma}h_{j}^{-(2+d/2)}\|e_{j-1}\|_{W^{\sigma,2}(\Omega)}.
\end{align*}
\end{proof}

We are now in the position to prove convergence of the multilevel unsymmetric
collocation for the equations (\ref{31})-(\ref{32}).
\begin{theorem}\label{multilevel}(\textbf{Convergence of multilevel unsymmetric collocation})
Let $\Omega\subseteq \mathbb{R}^{d}$ be a bounded domain has a $C^{k,s}-$boundary with $s\in[0,1)$.
Let the trial data $Y_{j}$ with the fill distance $h_{j}$
and the testing data $X_{j}$ with the
fill distance $s_{j}$ both be quasi-uniform for $j=1,2,\ldots$.
Let $\sigma=k+s$, $k=\lfloor\sigma\rfloor> d/2+2$.
Let $u\in W^{\sigma,2}(\Omega)$ be the solution of
(\ref{31})-(\ref{32}). Suppose $v_{j}\in W_{j}$ are the
local correction decided by Algorithm \ref{algo}.
Assume that (\ref{loc1})-(\ref{loc2}) can be solved by $v_{j}$ with the tolerance
(\ref{tolerance2}). Assume that $\delta_{j}$, $h_{j}$ satisfy the relations:
\begin{equation}\label{restrit}
\delta_{j}=h_{j-1},\quad h_{j}=\mu h_{j-1}^{\frac{2\sigma}{\sigma-2-d/2}},\quad j=1,2,\ldots.
\end{equation}
with appropriate $h_{0}$ and $0<\mu<1$.
And assume the constant $\mu$ has been chosen sufficiently small such that
$$\alpha:=C\mu^{\sigma-2-d/2}<1.$$
If the testing discretizations is finer than the trial discretization and
satisfies condition (\ref{condition2}),
then there exists a constant $C>0$ such that
\begin{equation*}
\|u-u_{k}\|_{L^{2}(\Omega)}\leq C\alpha^{k}\|u\|_{W^{\sigma,2}(\Omega)}.
\end{equation*}
\end{theorem}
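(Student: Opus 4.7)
The plan is to iterate Lemma~\ref{51} across the first $k-1$ levels in the $W^{\sigma,2}(\Omega)$ norm, switch to Theorem~\ref{onelevel} at the last level to pass into $L^2(\Omega)$, and then exploit the scaling relations (\ref{restrit}) in a telescoping product. Write $\beta := \sigma - 2 - d/2$ and $\gamma := \sigma + 2 + d/2$, so that $\beta + \gamma = 2\sigma$, and observe that the recursion $h_j = \mu h_{j-1}^{2\sigma/\beta}$ is equivalent to the algebraic identity
$$h_j^{\beta} \;=\; \mu^{\beta}\, h_{j-1}^{2\sigma},$$
which is the engine of the telescoping.

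The first step is to iterate Lemma~\ref{51} for $j = 1,\ldots,k-1$. Using $\delta_j = h_{j-1}$ and the initial value $e_0 = u$, a direct product of the one-step ratios $C\delta_j^{-\sigma}h_j^{-(2+d/2)}$ gives, after routine exponent bookkeeping,
$$\|e_{k-1}\|_{W^{\sigma,2}(\Omega)} \;\leq\; C^{k-1}\,h_0^{-\sigma}\,h_{k-1}^{-(2+d/2)}\prod_{j=1}^{k-2} h_j^{-\gamma}\,\|u\|_{W^{\sigma,2}(\Omega)}.$$
At the final level, since $f_{k-1} = f - Lu_{k-1} = Le_{k-1}$ and $g_{k-1} = g - u_{k-1} = e_{k-1}|_{\partial\Omega}$, the correction $v_k$ is exactly the one-level unsymmetric collocation approximation to $e_{k-1}$, and the tolerance (\ref{tolerance2}) matches the hypothesis (\ref{tolerance}) needed in Theorem~\ref{onelevel}. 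Applying Theorem~\ref{onelevel} therefore yields
$$\|e_k\|_{L^2(\Omega)} \;=\; \|e_{k-1} - v_k\|_{L^2(\Omega)} \;\leq\; C\,\delta_k^{-\sigma}\, h_k^{\beta}\, \|e_{k-1}\|_{W^{\sigma,2}(\Omega)},$$
and inserting the previous bound (with $\delta_k = h_{k-1}$) produces
$$\|e_k\|_{L^2(\Omega)} \;\leq\; C^{k}\,h_0^{-\sigma}\,h_k^{\beta}\prod_{j=1}^{k-1} h_j^{-\gamma}\,\|u\|_{W^{\sigma,2}(\Omega)}.$$

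The crux is then to collapse $P_k := h_k^{\beta}\prod_{j=1}^{k-1} h_j^{-\gamma}$. From the key identity, $h_j^{\beta}\, h_{j-1}^{-\gamma} = \mu^{\beta} h_{j-1}^{\beta+\gamma-\gamma} = \mu^{\beta}\, h_{j-1}^{\beta}$, so a short induction on $k$ (base case $P_1 = h_1^{\beta} = \mu^{\beta} h_0^{2\sigma}$) gives $P_k = \mu^{k\beta}\, h_0^{2\sigma}$. Substituting back,
$$\|e_k\|_{L^2(\Omega)} \;\leq\; (C\mu^{\beta})^{k}\, h_0^{\sigma}\,\|u\|_{W^{\sigma,2}(\Omega)} \;=\; \alpha^{k}\, h_0^{\sigma}\,\|u\|_{W^{\sigma,2}(\Omega)},$$
and since $h_0 \in (0,1]$ the factor $h_0^{\sigma}\leq 1$ is absorbed into the constant, yielding the claimed bound. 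The principal obstacle is purely algebraic: the factors $\delta_j^{-\sigma} = h_{j-1}^{-\sigma}$ and $h_j^{-(2+d/2)}$ coming from Lemma~\ref{51} must merge so that every interior $h_j$ carries precisely the exponent $-\gamma$, which is exactly what the recursion $h_j^{\beta} h_{j-1}^{-\gamma} = \mu^{\beta} h_{j-1}^{\beta}$ requires to make the product collapse. Once this bookkeeping is correct, no analytic ingredient beyond Lemma~\ref{51} and Theorem~\ref{onelevel} is required; the quasi-uniformity and testing-finer-than-trial condition (\ref{condition2}) are already packaged into the hypotheses of those two results.
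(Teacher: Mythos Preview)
Your argument is correct and reaches the stated bound, but it follows a genuinely different path from the paper. The paper works in the \emph{native space} norms: it establishes a one-step contraction $\|Ee_{j}\|_{\Phi_{j+1}}\leq \alpha\,\|Ee_{j-1}\|_{\Phi_{j}}$ by splitting the Fourier integral $\int |\widehat{Ee_j}|^2/\widehat{\Phi_{j+1}}$ into low- and high-frequency parts, bounding the low-frequency part via Theorem~\ref{onelevel} and the high-frequency part via Lemma~\ref{51}, and only then iterating and converting to $L^2(\Omega)$ at the end. You instead stay in the Sobolev norm $W^{\sigma,2}(\Omega)$ throughout, iterate Lemma~\ref{51} directly $k-1$ times, invoke Theorem~\ref{onelevel} just once at the final level, and collapse the resulting product of powers of $h_j$ by the purely algebraic identity $h_j^{\beta}h_{j-1}^{-\gamma}=\mu^{\beta}h_{j-1}^{\beta}$. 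Your route is more elementary in that it avoids the Fourier-side splitting and the passage through the scaled native norms $\|\cdot\|_{\Phi_{j}}$; the paper's route, by contrast, yields a clean level-by-level contraction in a norm that changes with $j$, which is closer in spirit to the multilevel-interpolation literature it cites. Both approaches rely on exactly the same analytic inputs (Lemma~\ref{51} and Theorem~\ref{onelevel}) and the same parameter relations~(\ref{restrit}); the difference is organisational. One small remark: you write ``since $h_0\in(0,1]$'', but the theorem only says ``appropriate $h_0$''; since $h_0$ is fixed, $h_0^{\sigma}$ is a fixed constant regardless and can be absorbed into $C$, so this does not affect correctness.
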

\begin{proof}
First, we establish the error genetic relationship from $j$ level to $j-1$ level.
The technical process is similar to the proofs in \citet{ch14,farell13,w10,w18}.
\begin{align*}
\|Ee_{j}\|^{2}_{\Phi_{j+1}}&=\int_{\mathbb{R}^{d}}\frac{|\widehat{Ee_{j}}(\boldsymbol{\omega})|^{2}}
{\widehat{\Phi_{j+1}}(\boldsymbol{\omega})}d\boldsymbol{\omega}
=\int_{\mathbb{R}^{d}}\frac{|\widehat{Ee_{j}}(\boldsymbol{\omega})|^{2}}
{\widehat{\Phi}(\delta_{j+1}\boldsymbol{\omega})}d\boldsymbol{\omega}
\\[4pt]
&\leq C\int_{\mathbb{R}^{d}}|\widehat{Ee_{j}}(\boldsymbol{\omega})|^{2}
(1+\delta_{j+1}^{2}\|\boldsymbol{\omega}\|_{2}^{2})^{\sigma}d\boldsymbol{\omega}
\\[4pt]
&\leq C\int_{\|\boldsymbol{\omega}\|_{2}\leq\frac{1}{\delta_{j+1}}}|\widehat{Ee_{j}}(\boldsymbol{\omega})|^{2}
(1+\delta_{j+1}^{2}\|\boldsymbol{\omega}\|_{2}^{2})^{\sigma}d\boldsymbol{\omega}
\\[4pt]
&\quad +C\int_{\|\boldsymbol{\omega}\|_{2}\geq\frac{1}{\delta_{j+1}}}|\widehat{Ee_{j}}(\boldsymbol{\omega})|^{2}
(1+\delta_{j+1}^{2}\|\boldsymbol{\omega}\|_{2}^{2})^{\sigma}d\boldsymbol{\omega}
\\[4pt]
&=: CI_{1}+CI_{2}.
\end{align*}
For the first term on the right-hand side, we use Lemma \ref{skuo}, the convergence of one-level
unsymmetric collocation, Lemma \ref{mdj}, and the variable relationship
\begin{equation*}
\delta^{-4\sigma}_{j}h^{2(\sigma-2-d/2)}_{j}=h_{j-1}^{-4\sigma}h_{j}^{2(\sigma-2-d/2)}
=\mu^{2(\sigma-2-d/2)},
\end{equation*}
we obtain
\begin{align*}
I_{1}&\leq2^{\sigma}\int_{\|\boldsymbol{\omega}\|_{2}\leq\frac{1}{\delta_{j+1}}}
|\widehat{Ee_{j}}(\boldsymbol{\omega})|^{2}d\boldsymbol{\omega}
\leq2^{\sigma}\int_{\mathbb{R}^{d}}
|\widehat{Ee_{j}}(\boldsymbol{\omega})|^{2}d\boldsymbol{\omega}
\\[4pt]
&= 2^{\sigma}\|Ee_{j}\|^{2}_{L^{2}(\mathbb{R}^{d})}
\leq C\|e_{j}\|^{2}_{L^{2}(\Omega)}
\\[4pt]
&\leq C\delta^{-2\sigma}_{j}h^{2(\sigma-2-d/2)}_{j}\|e_{j-1}\|^{2}_{W^{\sigma,2}(\Omega)}
\\[4pt]
&\leq C\delta^{-2\sigma}_{j}h^{2(\sigma-2-d/2)}_{j}\|Ee_{j-1}\|^{2}_{W^{\sigma,2}(\mathbb{R}^{d})}
\\[4pt]
&\leq C\delta^{-4\sigma}_{j}h^{2(\sigma-2-d/2)}_{j}\|Ee_{j-1}\|^{2}_{\Phi_{j}}
\\[4pt]
&\leq C\mu^{2(\sigma-2-d/2)}\|Ee_{j-1}\|^{2}_{\Phi_{j}}
\end{align*}
For the second term on the right-hand side, we use Lemma \ref{skuo}, Lemma \ref{51}, Lemma \ref{mdj},
and the variable relationship
\begin{equation*}
\delta_{j+1}^{2\sigma}\delta_{j}^{-4\sigma}h_{j}^{-(4+d)}=h_{j}^{2\sigma}h_{j-1}^{-4\sigma}h_{j}^{-(4+d)}
=\mu^{2(\sigma-2-d/2)},
\end{equation*}
we obtain
\begin{align*}
I_{2}&\leq 2^{\sigma}\delta_{j+1}^{2\sigma}\int_{\|\boldsymbol{\omega}\|_{2}
\geq\frac{1}{\delta_{j+1}}}|\widehat{Ee_{j}}(\boldsymbol{\omega})|^{2}
\|\boldsymbol{\omega}\|_{2}^{2\sigma}d\boldsymbol{\omega}
\\[4pt]
&\leq 2^{\sigma}\delta_{j+1}^{2\sigma}\int_{\mathbb{R}^{d}}|\widehat{Ee_{j}}(\boldsymbol{\omega})|^{2}
(1+\|\boldsymbol{\omega}\|_{2}^{2})^{\sigma}d\boldsymbol{\omega}
\\[4pt]
&\leq2^{\sigma}\delta_{j+1}^{2\sigma}\|Ee_{j}\|^{2}_{W^{\sigma,2}(\mathbb{R}^{d})}
\\[4pt]
&\leq C\delta_{j+1}^{2\sigma}\|e_{j}\|^{2}_{W^{\sigma,2}(\Omega)}
\\[4pt]
&\leq C\delta_{j+1}^{2\sigma}\delta_{j}^{-2\sigma}h_{j}^{-(4+d)}\|e_{j-1}\|^{2}_{W^{\sigma,2}(\Omega)}
\\[4pt]
&\leq C\delta_{j+1}^{2\sigma}\delta_{j}^{-4\sigma}h_{j}^{-(4+d)}\|Ee_{j-1}\|^{2}_{\Phi_{j}}
\\[4pt]
&\leq C\mu^{2(\sigma-2-d/2)}\|Ee_{j-1}\|^{2}_{\Phi_{j}}.
\end{align*}
From the upper bounds of $I_{1},I_{2}$, choosing $\alpha=C\mu^{\sigma-2-d/2}$ we have
\begin{equation}\label{yichuan}
\|Ee_{j}\|_{\Phi_{j+1}}\leq C\mu^{\sigma-2-d/2}\|Ee_{j-1}\|_{\Phi_{j}}=\alpha\|Ee_{j-1}\|_{\Phi_{j}}.
\end{equation}
Then we obtain
\begin{align*}
\|u-u_{k}\|_{L^{2}(\Omega)}&=\|e_{k}\|_{L^{2}(\Omega)}=\|e_{k-1}-v_{k}\|_{L^{2}(\Omega)}
\\[4pt]
&\leq\|e_{k-1}-I_{k}e_{k-1}\|_{L^{2}(\Omega)}+\|I_{k}e_{k-1}-v_{k}\|_{L^{2}(\Omega)}
\\[4pt]
&\leq C\delta_{k}^{-\sigma}h_{k}^{\sigma}\|e_{k-1}\|_{W^{\sigma,2}(\Omega)}+
C\delta_{k}^{-\sigma}h_{k}^{\sigma-2-d/2}\|e_{k-1}\|_{W^{\sigma,2}(\Omega)}
\\[4pt]
&\leq C\delta_{k}^{-\sigma}h_{k}^{\sigma-2-d/2}\|e_{k-1}\|_{W^{\sigma,2}(\Omega)}
\\[4pt]
&\leq C\delta_{k}^{-\sigma}h_{k}^{\sigma-2-d/2}\|Ee_{k-1}\|_{W^{\sigma,2}(\mathbb{R}^{d})}
\\[4pt]
&\leq C\delta_{k}^{-2\sigma}h_{k}^{\sigma-2-d/2}\|Ee_{k-1}\|_{\Phi_{k}}
\\[4pt]
&\leq C\alpha\|Ee_{k-1}\|_{\Phi_{k}}.
\end{align*}
Then using (\ref{yichuan}) $k$ times, we have
\begin{equation*}
\|u-u_{k}\|_{L^{2}(\Omega)}\leq C\alpha\|Ee_{k-1}\|_{\Phi_{k}}\leq\ldots\leq C\alpha^{k}\|Eu\|_{\Phi_{1}}
\leq C\alpha^{k}\|u\|_{W^{\sigma,2}(\Omega)}.
\end{equation*}
\end{proof}

\section{Numerical example}
\subsection{Computational aspects}
By using the two testing operators $\Pi_{X^{I}}$ and $\Pi_{X^{B}}$, the linear systems (\ref{ls1})-(\ref{ls2}) can be rewritten as
\begin{align}
\Pi_{X^{I}}Ls_{u}& = \Pi_{X^{I}}f = \Pi_{X^{I}}Lu,
\label{61} \\[4pt]
\Pi_{X^{B}}s_{u} & = \Pi_{X^{B}}g =\Pi_{X^{B}}u.
\label{62}
\end{align}
We denote the residual as
$r=\left(
  \begin{array}{c}
    r_{I}\\
    r_{B} \\
  \end{array}
\right)$, with
$$r_{I}=\Pi_{X^{I}}L(u-s_{u}),\quad r_{B}=\Pi_{X^{B}}(u-s_{u}).$$
To ensure the assumption (\ref{tolerance}), we need to bound $r_{I}$ and $r_{B}$ respectively. That is
\begin{equation*}
\|r_{I}\|_{\infty,X^{I}}
\leq C\delta^{-\sigma}h_{Y}^{\sigma-2-d/2}\|u\|_{W^{\sigma,2}(\Omega)},
\end{equation*}
\begin{align*}
\|r_{B}\|_{\infty,X^{I}}
&\leq Cs_{B}^{1/2}\delta^{-\sigma}h_{Y}^{\sigma-2-d/2}\|u\|_{W^{\sigma,2}(\Omega)}\\[4pt]
&\leq Cs_{X}^{1/2}\delta^{-\sigma}h_{Y}^{\sigma-2-d/2}\|u\|_{W^{\sigma,2}(\Omega)}\\[4pt]
&\leq C\delta^{-\sigma}h_{Y}^{\sigma/(2\sigma-4)+\sigma-2-d/2}\|u\|_{W^{\sigma,2}(\Omega)},
\end{align*}
where we used the condition (\ref{condition}) for the proof of the second inequality. Thus, we only
ask the residual to meet
\begin{equation}\label{residual}
\|r\|_{\infty,X}\leq  C\delta^{-\sigma}h_{Y}^{\sigma/(2\sigma-4)+\sigma-2-d/2}\|u\|_{W^{\sigma,2}(\Omega)}.
\end{equation}
The condition (\ref{tolerance2}) is the same as (\ref{tolerance}), but it needs to be satisfied at level $j$.

We consider the following Poisson problem with Dirichlet boundary conditions:
\begin{eqnarray*}
\triangle u(x,y)&=&-\frac{5}{4}\pi^{2}\sin(\pi x)\cos(\frac{\pi y}{2}), \quad (x,y)\in\Omega=(0,1)^{2},\nonumber\\
 u(x,y)&=&\sin(\pi x), \quad (x,y)\in\Gamma_{1},\nonumber\\
 u(x,y)&=&0, \quad (x,y)\in\Gamma_{2},
 \end{eqnarray*}
where $\Gamma_{1}=\{(x,y): 0\leq x\leq1, y=0\}$ and $\Gamma_{2}=\partial\Omega \setminus \Gamma_{1}$.
The exact solution of the problem is given by
$$u(x,y)=\sin(\pi x)\cos(\frac{\pi y}{2}).$$

In our experiments, Wendland's compactly supported function
$\phi(r)=(1-r)_{+}^{8}(32r^{3}+25r^{2}+8r+1)\in C^{6}(\mathbb{R}^{2})$ is used to construct trial spaces.
This function can generate Sobolev space $W^{\sigma,2}(\mathbb{R}^{2})$ with $\sigma=4.5$.
For the every non square linear systems
$$Ac=b$$
produced by RBF discretization, we use the conjugate gradient method (CG) to solve it
by multiplying the left and right sides by $A^{T}$. Simple calculation shows that the tolerance of CG is about $h_{Y}^{2.4}$.

\subsection{Numerical results}
Although \citet{f07} has provided some numerical examples to show the performance of one-level and multilevel unsymmetric collocation, 
we will restrict ourselves here to  nonsquare collocation because of \citet{hon}. Computations were performed on a laptop with 1.3 GHz Intel Core i7-1065G7 processor, using MATLAB running in the Windows 10 operating system. 

We choose the point sets to be on nested uniform grids and create additional equally spaced $4(\sqrt{N}-1)$ boundary points. To ensure the conditions (\ref{condition}) or (\ref{condition2}), the collocation is implemented on the next fine point sets. For the one-level collocation case, we fix $\delta=2.0$. For the multilevel case, by using the conditions of the Theorem \ref{multilevel}
we use a variable
$$\delta_{j}=v\left(\frac{h_{j}}{\mu}\right)^{\frac{\sigma-2-d/2}{2\sigma}},$$
but let the meshsize have a linear relationship $h_{j}=\mu h_{j-1}, \mu=0.5$ as the same as the one-level case.
In order to be consistent with the experiments in \citet{farell13}, we select $v=2.4$. 

The numerical results are shown in Tables 1,2, where we have given
$L^{2}(\Omega)$ error discretized on a $10000\times10000$ grid, the producing order, the tolerance determined by
(\ref{tolerance}) or (\ref{tolerance2}), and the number of CG iterations. It seems that we can get the expected approximation order.
These results differ from Table 6.2 in \citet{farell13} and Table 41.3 in \citet{f07}, because we are implementing nonsquare collocation under 
the conditions (\ref{condition}) or (\ref{condition2}), and the tolerance (\ref{tolerance}) or (\ref{tolerance2}).
\begin{table}[!h]
\tabcolsep 5mm \caption{One-level unsymmetric collocation: fixed $\delta=2.0$, expected order $\approx 1.5$.}
\begin{center}
\begin{tabular}{r@{}lr@{}lr@{}lr@{.}lr@{}lr@{.}lr@{}l}
\hline\multicolumn{2}{c}{Level}& \multicolumn{2}{c}{$\delta$}&\multicolumn{2}{c}{$N$}&\multicolumn{2}{c}{$L^{2}(\Omega)$}&\multicolumn{2}{c}{Order}
&\multicolumn{2}{c}{Tolerance}
&\multicolumn{2}{c}{CG}
\\ \hline
1&&2.0&&9&&      4&936e-01&    &&        0&1895&  6\\
2&&2.0&&25&&     3&623e-01&  0.446&&  0&0359&  20\\
3&&2.0&&81&&     9&849e-02&  1.879&&  0&0068&  297\\
4&&2.0&&289&&    2&830e-02&  1.799&&  0&0013&  8241\\
\hline
\end{tabular}
\end{center}
\end{table}

\textcolor[rgb]{1.00,0.00,0.00}{\begin{table}[!h]
\tabcolsep 5mm \caption{Multilevel unsymmetric collocation: variable $\delta_{j}=v\left(\frac{h_{j}}{\mu}\right)^{\frac{\sigma-2-d/2}{2\sigma}}$,
$v=2.4$, expected order $\approx 1.5$.}
\begin{center}
\begin{tabular}{r@{}lr@{}lr@{}lr@{.}lr@{}lr@{.}lr@{}l}
\hline\multicolumn{2}{c}{Level}& \multicolumn{2}{c}{$\delta_{j}$}&\multicolumn{2}{c}{$N$}&\multicolumn{2}{c}{$L^{2}(\Omega)$}&\multicolumn{2}{c}{Order}
&\multicolumn{2}{c}{Tolerance}
&\multicolumn{2}{c}{CG}
\\ \hline
1&&2.4&&9&&      4&334e-01&    &&        0&1895&  6\\
2&&2.1&&25&&     3&028e-01&  0.517&&  0&0359&  13\\
3&&1.9&&81&&     1&076e-01&  1.493&&  0&0068&  236\\
4&&1.7&&289&&    2&897e-02&  1.893&&  0&0013&  9896\\
\hline
\end{tabular}
\end{center}
\end{table}
}
\section{Conclusions}
We proved convergence of one-level and multilevel unsymmetric collocation for second order
elliptic boundary value problems. The main theoretical results are Theorem \ref{onelevel} and Theorem \ref{multilevel}.
These theoretical results are obtained with the help of the regularity theory of the solution (Lemmas \ref{trace}-\ref{xianyan}),
the norm equivalence between Sobolev space
$W^{\sigma,2}(\mathbb{R}^{d})$ and the reproducing kernel Hilbert space generated by $\Phi_{\delta}$ (Lemma \ref{mdj}),
the inverse inequality which is satisfied by the functions from kernel-based trial space $W_{\delta}$ (Lemma \ref{invv}),
the sampling inequality (Lemma \ref{wl3}).
Although we obtain $L^{2}$ error of multilevel unsymmetric collocation, it is currently
not yet optimal and deserve refinement. The condition (\ref{restrit}) is too restrictive,
which is not conducive to numerical calculation. In fact, it is caused by the
counteracting term $h_{j}^{-(2+d/2)}$ in error transfer relations (\ref{yichuan1}). It
becomes the price we have to pay for the proof of convergence in multilevel case.
The future proof will eliminate the counteracting term by avoiding the use of (\ref{inv}).

Many papers have performed a large number of numerical experiments to verify the effectiveness
of one-level unsymmetric collocation. Fasshauer's numerical experiments show that
multilevel unsymmetric collocation seems to have
convergence in the absence of the condition (\ref{restrit}) \citep[see][]{f07}, when the experiments
were carrying out under the condition of testing side has the same data sites with the trial side.
But we still need to consider the nonsquare system when implementing unsymmetric collocation because of \citet{hon}.

Many possibilities for enhancement and extension need to be further studied in the future:

(1) The convergence results of multilevel unsymmetric collocation still need improvement by proving 
better Lemma \ref{51}. We predict that the ideal result may be: 
$$\delta_{j}\approx\left(\frac{h_{j}}{\mu}\right)^{1-\frac{2+d/2}{\sigma}},\quad h_{j}=\mu h_{j-1}.$$ 

(2) Observe and prove the condition number of $A^{T}A$ produced by nonsquare collocation matrix $A$. We 
are designing several preconditioners and iterative solvers for solving nonsquare 
systems obtained by unsymmetric  discretization. \citep[see][]{liu22}.

(3) Perform large-scale numerical experiments and compare the observed convergence order with the
theoretical ones of the paper.

\section*{Declarations}
\textbf{Funding}:
The research of the first author was supported
by Natural Science Foundations of China (No.12061057). 
The research of the second author was supported by
the Fourth Batch of the Ningxia Youth Talents Supporting Program (No.TJGC2019012).\\
\textbf{Conflicts of Interests}: The authors declare no conflicts of interest.

\clearpage
\end{document}